\renewcommand\appendix{\par
\setcounter{section}{0}%
\setcounter{subsection}{0}%
\setcounter{table}{0}
\setcounter{figure}{0}
\gdef\thetable{\Alph{table}}
\gdef\thefigure{\Alph{figure}}
\section*{Appendix}
\gdef\thesection{\Alph{section}}
\setcounter{section}{0}}
\newcommand{\GW}{\mathbb{P}}
\newcommand{\mP}{\mathbb{P}}
\newcommand{\oexp}{\mathrm{oe}}
\theoremstyle{plain}\newtheorem{teo}{Theorem}[section]
\newtheorem{prop}[teo]{Proposition}
\newtheorem{cor}[teo]{Corollary}
\newtheorem{lem}[teo]{Lemma}
\theoremstyle{remark}
\theoremstyle{definition}\newtheorem{df}{Definition}[section]
\renewcommand{\phi}{\varphi}
\renewcommand{\epsilon}{\varepsilon}
\newcommand{\1}{\mathds{1}}
\newcommand{\di}{\Delta}
\renewcommand{\root}{\emptyset}
\title{\huge\textsc{The Scaling Limit\\of Random Outerplanar Maps}}
\author{\Large{Alessandra Caraceni}\footnote{Scuola Normale Superiore di Pisa, e-mail: alessandra.caraceni@sns.it}}
\date{}
\begin{document}
\maketitle

\begin{abstract}
A planar map is outerplanar if all its vertices belong to the same face. We show that random uniform outerplanar maps with $n$ vertices suitably rescaled by a factor $1/ \sqrt{n}$ converge in the Gromov-Hausdorff sense to ${7 \sqrt{2}}/{9}$ times Aldous' Brownian tree. The proof uses the bijection of Bonichon, Gavoille and Hanusse \cite{bijectionpaper}.
\end{abstract}

\section{Introduction}\label{intro}

\begin{figure}
\centering
\includegraphics[width=\textwidth]{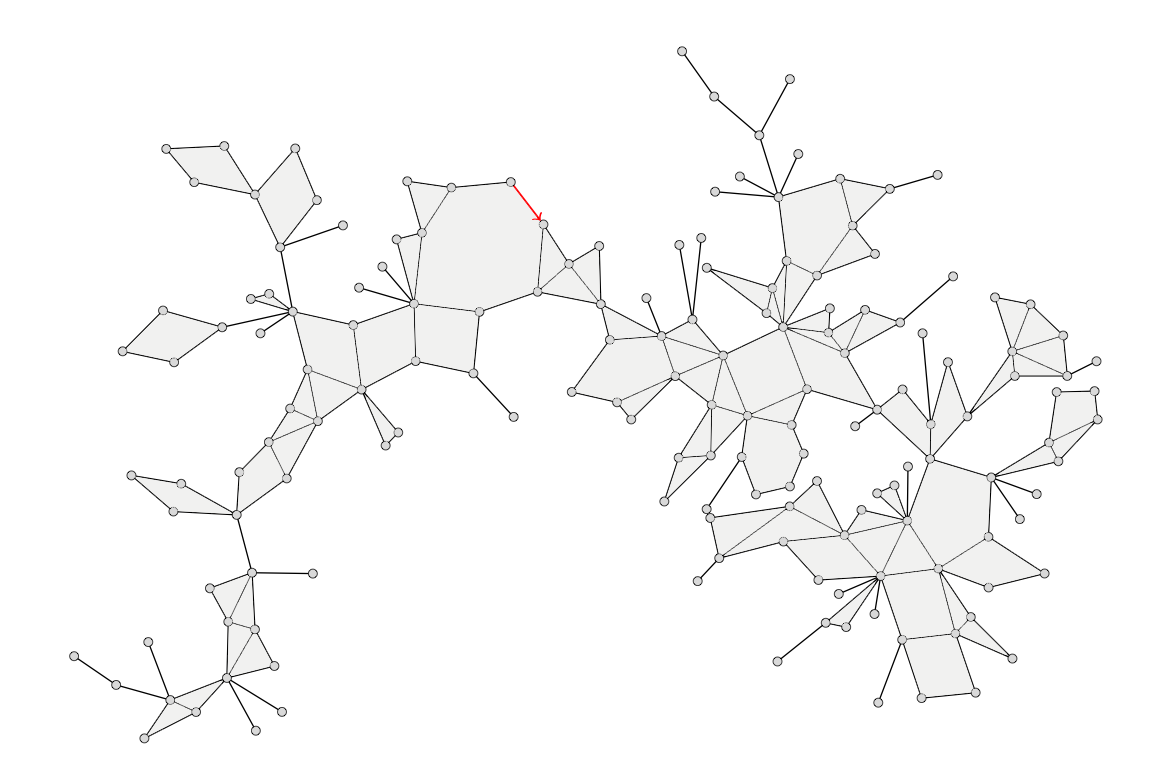}
\caption{A rooted, simple outerplanar map.}\label{outerplanar map}
\end{figure}

Since the early $90'$s a lot of effort has gone into the study of scaling limits for large combinatorial structures. The most emblematic result is, of course, the construction of the continuum random tree (CRT) by Aldous \cite{Ald91a,Ald91,Ald93} as the scaling limit of various classes of oriented trees. The CRT has since been shown to be a universal object: see e.g.\,\,\cite{ABBG12,AM08,CHKdissections,HM12,GW} and references therein, as well as the recent work \cite{PSW}. In this work we shall establish that the CRT is also the scaling limit of uniform random large outerplanar maps.

Recall that a planar map is a proper embedding of a finite connected graph into the plane (or the sphere), considered up to continuous deformations. A recent breakthrough was achieved by Le Gall and Miermont \cite{LG11,Mie11}, who showed that several classes of random planar maps admit the so-called Brownian map as scaling limit. It has been observed, however, that -- for some particular regimes -- random planar maps with a unique macroscopic large face have a tree-like structure and admit the CRT as scaling limit; see \cite{Bet11,BG09,JS12,PSW}. Our main result consists in a confirmation of this phenomenon for the case of outerplanar maps.

A map is \emph{outerplanar} if all of its vertices are adjacent to the same face, which is dubbed the \emph{outerface} and usually drawn as the infinite face in a planar embedding. Outerplanar maps constitute a well-studied combinatorial structure; in particular, they have a simple characterisation in terms of minors (a graph is outerplanar if and only if it does not contain $K_{2,3}$ nor $K_{4}$ as a minor \cite{minors}). See \cite{char} for more characterisations of outerplanar graphs. In this work we shall restrict ourself to \emph{simple} outerplanar maps, with no loops or multiple edges. As usual all of the maps considered here are \emph{rooted}, that is endowed with a distinguished oriented edge such that the outerface is lying on its left. The tail of the root edge will be called the \emph{root vertex}. Our main result is the following:

\begin{teo}\label{main}Let $\mathbf{M}_{n}$ be a random uniform rooted simple outerplanar map with $n$ vertices, and denote by $ d_{gr}$ the graph distance on  the set of its vertices $V(\mathbf{M}_{n})$. We have the following convergence in distribution for the Gromov-Hausdorff topology:
$$ \left( V(\mathbf{M}_{n}), \frac{d_{gr}}{ \sqrt{n}} \right) \quad \xrightarrow[n\to\infty]{(d)} \quad \frac{7 \sqrt{2}}{9} \cdot ( \mathcal{T}_{e},d),$$
where $ ( \mathcal{T}_{e}, d)$ is the Brownian CRT of Aldous. We adopt here the normalisation of Le Gall \cite{GW} by considering  $ \mathcal{T}_{e}$ as constructed from a normalised Brownian excursion.
\end{teo}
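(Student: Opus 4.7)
The plan is to exploit the Bonichon-Gavoille-Hanusse bijection cited in the abstract, which encodes a rooted simple outerplanar map by a tree-like object reflecting its block decomposition. A $2$-connected simple outerplanar map is exactly a dissection of a polygon (a polygon with non-crossing internal diagonals), so any simple outerplanar map can be described as a tree of dissected polygons glued at cut-vertices. I would read BGH as producing, from an outerplanar map with $n$ vertices, a plane tree whose nodes carry polygonal decorations (the blocks), together with some attachment data. The first step is therefore to rewrite the uniform measure on outerplanar maps with $n$ vertices as an explicit measure on such decorated trees.

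Next I would show that this induced measure is that of a (possibly two-type) Galton-Watson tree conditioned on total size $n$. The key computation is to identify the offspring generating function from the known enumeration of polygon dissections (a Schroeder-type series) and to verify criticality and finite variance after the standard Boltzmann tilt. Once this is done, Aldous's theorem -- or its multi-type extension by Miermont, together with reductions to the monotype case as in the work on simply generated trees -- yields convergence of the encoding tree, rescaled by $1/(\sigma\sqrt{n})$ for the appropriate $\sigma$, to the CRT $\mathcal{T}_e$ for the Gromov-Hausdorff topology.

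The conceptual difficulty, and the step where the constant $7\sqrt{2}/9$ ultimately comes from, is the passage from the tree distance on the BGH encoding to the graph distance $d_{gr}$ on the outerplanar map itself. Two vertices lying in the same block of boundary length $k$ are typically not adjacent: their map distance is at most their cyclic distance on the polygon, but is shortened by the internal diagonals. The plan is to (i) establish, for a uniform dissection of a polygon of size $k$, the typical asymptotic distance between two marked boundary vertices as $k\to\infty$, (ii) combine this with the limiting distribution of block sizes traversed by a typical geodesic under the critical Galton-Watson law to extract a deterministic multiplicative distortion constant $c$, and (iii) prove uniform tightness of the distortion so that rescaling $d_{gr}$ by $1/\sqrt{n}$ yields convergence to $(c/\sigma)\cdot\mathcal{T}_e$, with $c/\sigma = 7\sqrt{2}/9$.

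I expect step (i) to be the main obstacle: controlling distances inside a uniformly random polygon dissection requires either a direct and somewhat delicate generating function analysis at the singularity, or an appeal to scaling limit results for random dissections \`a la Curien-Haas-Kortchemski. Steps (ii) and (iii) then follow a well-established pattern -- a spinal decomposition of the conditioned Galton-Watson tree combined with a second-moment argument on the number of large blocks along a typical geodesic -- provided the block-size offspring law has sufficiently light tails, which the explicit BGH generating function should guarantee. Pinning down the final constant is then a matter of collecting the mean block length along a spine, the variance $\sigma^2$, and the per-block distance factor obtained in (i).
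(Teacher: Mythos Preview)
Your proposal rests on a misreading of the Bonichon--Gavoille--Hanusse bijection. BGH does \emph{not} produce a block tree decorated by polygon dissections: it is a bijection between simple rooted outerplanar maps with $n$ vertices and \emph{well bicoloured plane trees} with $n$ vertices (each vertex black or white, with the rightmost branch forced white). The tree has the \emph{same} vertex set as the map; the map is recovered by drawing, for each black vertex, one extra edge to its ``target'' (its next unrelated vertex in the clockwise contour). There are no polygonal decorations, no block sizes to analyse, and no Schr\"oder-type generating function appears.

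Consequently the paper's route is quite different from yours. Because tree and map share vertices, the question becomes: how does the map metric $d_M$ compare to the tree metric $d_T$ on the same set? The paper answers this by building an explicit step-by-step algorithm that, given $(\tau,x)$, traces a map-geodesic from $x$ to the root using only local information (colour of $x$, number of right siblings, etc.). When run on the geometric Galton--Watson tree conditioned to survive, uniformly bicoloured, the sequence of ``states'' of this algorithm is a four-state Markov chain; its stationary distribution has mass $2/9$ on the ``jump'' state, yielding $d_M \sim \tfrac{7}{9}\, d_T$ along the spine. A large-deviations bound for this chain, transferred back to the conditioned finite tree via the usual spine decomposition, gives uniform control of $|d_M(u,v)-\tfrac{7}{9}d_T(u,v)|$ over all pairs. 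The factor $\sqrt{2}$ then comes from the variance of the critical geometric offspring law in Aldous's theorem, and $7/9\cdot\sqrt{2}=7\sqrt{2}/9$.

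A block-tree approach of the kind you sketch is a legitimate \emph{alternative} (it is, for instance, how Panagiotou--Stufler--Weller treat related classes), but it is not BGH, and your step~(i) is aimed in the wrong direction: under the relevant Boltzmann weights the $2$-connected blocks have exponentially decaying size, so there is no ``large-$k$ dissection'' regime to analyse. The distortion constant would instead come from the expected map-diameter contribution of a \emph{bounded} random block along the spine, which is a different (and easier) computation than the one you outline.
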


The first ingredient in our proof is a way to relate outerplanar maps to plane trees; this will be done using the bijection of Bonichon, Gavoille and Hanusse \cite{bijectionpaper} between the set of (simple and rooted) outerplanar maps with $n$ vertices and a special class of bicoloured plane trees with $n$ vertices which is described in Section~\ref{bijection}. The plan of the proof then partially follows that of \cite{CHKdissections}, in which Curien, Haas and Kortchemski prove the convergence of random dissections to a scalar multiple of the CRT. More specifically, we will show that the distances on an outerplanar map are roughly proportional to the distances on the associated tree. To this end, we describe throughout Section \ref{algorithm} an algorithm that, given a bicoloured tree and a vertex $v$, yields the length of a geodesic path from $v$ to the root vertex in the associated outerplanar map. When applied to the model of a bicoloured Galton-Watson tree conditioned to survive (presented in Section~\ref{infinite tree}) this algorithm yields a Markov chain whose mean increment (under the stationary distribution) gives the asymptotical proportionality constant between the metric on a large outerplanar map and that of its associated tree. The distances between arbitrary pairs of points are finally controlled by a large deviations estimate, see Sections \ref{markov} and \ref{conclusion}.

\section{Outerplanar maps and plane trees}\label{bijection}

As mentioned earlier, the first ingredient needed for our discussion is a bijection found in \cite{bijectionpaper}, which enables the coding of outerplanar maps as bicoloured trees of a certain class. More specifically,

\begin{df}We say that a rooted plane tree $\tau$ is \emph{bicoloured} if each of its vertices is coloured either black or white; we shall say that $\tau$ is \emph{well bicoloured} if it is bicoloured and all of the vertices in its rightmost branch are coloured white (see Figure \ref{bicoloured tree}). We shall henceforth simply write \emph{well bicoloured tree} when referring to a well bicoloured rooted plane tree.
\end{df}

\bigskip

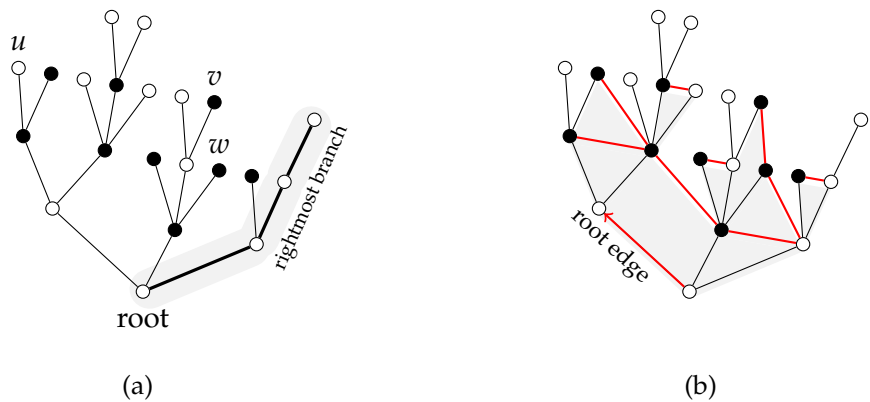
\begin{figure}[h]
\begin{subfigure}[b]{.5\textwidth}
\begin{tikzpicture}[node distance=20 pt, b/.style={circle,inner sep=0pt,minimum size=5pt, draw=black, fill=black}, w/.style={circle,inner sep=0pt,minimum size=5pt, draw=black, fill=white}, g/.style={circle,inner sep=0pt,minimum size=5pt, draw=black, fill=gray, parent anchor=north, child anchor=south}, r/.style={densely dashed,red}]
\begin{forest} for tree={l=25pt, grow'=80}
[,w, name=x, label=below:root[,w [,b,[,w,label=above:$u$][,b,,name=l]][,b,[,w][,b,[,w][,w]][,w]]][,b, [,b][,w[,w][,b,tikz={\draw[r, thick, ->] ()--(!un);},label=above:$v$]][,b,label=above:$w$]][,phantom][,w,name=y,edge=very thick,tikz={\begin{pgfonlayer}{background}\draw[gray!10, line width=15pt, line cap=round] ()--(!u);\end{pgfonlayer}}[,b][,w, edge=very thick,tikz={\begin{pgfonlayer}{background}\draw[gray!10, line width=15pt, line cap=round] ()--(!u) node[very near start, sloped,below, color=black, scale=.7] {\quad rightmost branch};\end{pgfonlayer}}[,phantom][,w,edge=very thick,tikz={\begin{pgfonlayer}{background}\draw[gray!10, line width=15pt, line cap=round] ()--(!u);\end{pgfonlayer}}]]]]
\end{forest}
\end{tikzpicture}
\vspace{-10pt}
\caption{}\label{bicoloured tree}
\end{subfigure} 
\begin{subfigure}[b]{.5\textwidth}
\begin{tikzpicture}[node distance=20 pt, b/.style={circle,inner sep=0pt,minimum size=5pt, draw=black, fill=black}, w/.style={circle,inner sep=0pt,minimum size=5pt, draw=black, fill=white}, g/.style={circle,inner sep=0pt,minimum size=5pt, draw=black, fill=gray, parent anchor=north, child anchor=south}, r/.style={thick,red}]
\begin{forest} for tree={l=25pt, grow'=80}
[,w, name=x, label=below:\phantom{root}[,w, tikz={\draw[r,<-] ()--(!u) node[near start, sloped,below, color=black, scale=.8] {root edge};\begin{pgfonlayer}{background}\fill[gray!10] (.south)--(!l.south)--(!s.south)--(!u.south)--(.south);\end{pgfonlayer}} [,b,tikz={\draw[r] ()--(!ul);\begin{pgfonlayer}{background}\fill[gray!10] (.south)--(!u.south)--(!s.south)--(.south);\end{pgfonlayer}}[,w][,b,,name=l,tikz={\draw[r] ()--(!uul);\begin{pgfonlayer}{background}\fill[gray!10] (.south)--(!u.south)--(!us.south)--(.south);\end{pgfonlayer}}]][,b,tikz={\draw[r] ()--(!un);}[,w,name=y,][,b,tikz={\draw[r] ()--(!n);\begin{pgfonlayer}{background}\fill[gray!10] (.south)--(!u.south)--(!ul.south)--(.south);\end{pgfonlayer}}[,w][,w]][,w]]][,b,tikz={\draw[r] ()--(!nn);\begin{pgfonlayer}{background}\fill[gray!10] (.south)--(!u.south)--(!ul.south)--(.south);\end{pgfonlayer}}[,b,tikz={\draw[r] ()--(!n);\begin{pgfonlayer}{background}\fill[gray!10] (.south)--(!u.south)--(!s.south)--(.south);\end{pgfonlayer}}][,w[,w][,b,tikz={\draw[r] ()--(!un);\begin{pgfonlayer}{background}\fill[gray!10] (.south)--(!u.south)--(!uu.south)--(!uul.south)--(.south);\end{pgfonlayer}}]][,b,tikz={\draw[r] ()--(!unn);\begin{pgfonlayer}{background}\fill[gray!10] (.south)--(!u.south)--(!uul.south)--(.south);\end{pgfonlayer}}]][,phantom][,w,name=z[,b,tikz={\draw[r] ()--(!n);\begin{pgfonlayer}{background}\fill[gray!10] (.south)--(!u.south)--(!ul.south)--(.south);\end{pgfonlayer}}][,w,[,phantom][,w]]]]
\end{forest}
\end{tikzpicture} 
\vspace{-10pt}
\caption{}\label{bijection example}
\end{subfigure}
\caption{Figure (a) shows a well bicoloured tree. Vertices $u$ and $v$ are \emph{unrelated}. Vertex $w$ is the \emph{target} of $v$. Figure (b) represents its image via the map $\Psi$.}
\end{figure}

Some working knowledge of the explicit bijection is needed in the sections that follow, and thus part of the construction is included for future reference. We adopt notation coherent with that of \cite{bijectionpaper}: given two distinct vertices in a plane tree, we call them \emph{unrelated} if neither is an ancestor of the other.

Let now $\tau$ be a well bicoloured tree. For each black vertex $v$  of $\tau$ define the \emph{target} of $v$ to be its next unrelated vertex in a clockwise contour of $\tau$. Define $\Psi(\tau)$ as the rooted outerplanar map obtained by joining each black vertex of $\tau$ to its target (via an edge that leaves the rightmost corner of the black vertex and enters the target from the leftmost corner available); root the map on the edge joining the former root of the tree to its leftmost child, oriented in such a way that the former root is the tail, and forget the colouring of $\tau$ (see Figure \ref{bijection example} for an example).

Notice that the order in which the additional edges are drawn is not relevant, and that the root edge will have the infinite face on its left side. An inverse of $\Psi$ can be constructed explicitly, but we refer the reader to \cite{bijectionpaper} for the details and proof, since all that we will need is the following:

\begin{teo}[Bonichon, Gavoille, Hanusse \cite{bijectionpaper}]\label{bij}The map $\Psi$ is a bijection between well bicoloured trees with $n$ vertices and simple rooted outerplanar maps with $n$ vertices.
\end{teo}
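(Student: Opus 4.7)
The statement is attributed to Bonichon, Gavoille and Hanusse \cite{bijectionpaper}; my plan is to sketch the natural three-step verification of their result and point out where the technical weight lies, deferring the fully detailed combinatorics to the original paper.

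First, I would check that $\Psi$ is well-defined. The well-bicoloured hypothesis forces any black vertex $v$ off the rightmost branch, so if we continue the clockwise contour past the subtree rooted at $v$ we must eventually reach a vertex that is neither a descendant nor an ancestor of $v$; the first such vertex is the target, and hence the target map is defined on all black vertices. Next I would verify that the added chord edges do not cross: the ``next unrelated vertex in clockwise order'' rule ensures that the arcs $(v_{1}, w_{1})$ and $(v_{2}, w_{2})$ associated with two black vertices $v_{1}, v_{2}$ and their respective targets $w_{1}, w_{2}$ are either nested or occur in disjoint arcs of the outer contour, so that the rightmost-corner / leftmost-available-corner drawing convention yields a legitimate plane embedding. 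Finally, each chord subdivides only bounded regions, so every vertex remains incident to the unbounded face and $\Psi(\tau)$ is indeed outerplanar; the designated root edge has the outer face on its left by construction.

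Second, I would construct $\Psi^{-1}$ explicitly. Given a simple rooted outerplanar map $M$ with $n$ vertices, the tree can be recovered by a canonical outer-face traversal: starting from the root vertex and walking along the boundary of the outer face, each non-root vertex is first met along a unique edge, which I declare to be its parent-edge. These parent-edges form a spanning plane tree $\tau(M)$, while the remaining edges are the chord edges; I then colour black exactly the vertices that are the tail of at least one chord and white the rest. Since any vertex on the rightmost branch of $\tau(M)$ cannot be the tail of a chord (no unrelated target would lie clockwise from it), the resulting colouring is well bicoloured.

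The third and most delicate step is to prove $\Psi \circ \Psi^{-1} = \mathrm{id}$ and $\Psi^{-1} \circ \Psi = \mathrm{id}$, which amounts to identifying the ``next unrelated vertex'' rule defining the target in $\Psi$ with the ``first visit along the outer face'' rule identifying parent-edges in $\Psi^{-1}$. I would argue this by induction on the number of black vertices, peeling off innermost chords one at a time; at each step one must verify that the leftmost-available-corner at the target is still accessible after the previously drawn chords, and that removing a single chord does not disturb the parent-edge structure for the remaining ones. This delicate bookkeeping imposed by the rightmost/leftmost corner convention is the main obstacle and the technical heart of the proof in \cite{bijectionpaper}. A possible shortcut would be to prove injectivity of $\Psi$ first and then conclude bijectivity by comparing the known enumeration of simple rooted outerplanar maps with the count of well bicoloured trees of a given size, but combinatorially identifying the image still calls for essentially the same verification.
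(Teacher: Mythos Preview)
The paper itself does not prove this theorem: it states the result, attributes it to Bonichon, Gavoille and Hanusse, and explicitly refers the reader to \cite{bijectionpaper} for the construction of the inverse and the proof. Your proposal is therefore already more detailed than the paper's treatment, and your framing --- sketch the three natural steps and defer the delicate bookkeeping to the original source --- is exactly in the spirit of how the paper handles the result.

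That said, if you intend the sketch to stand on its own as an outline, your description of $\Psi^{-1}$ has a gap worth flagging. You write ``colour black exactly the vertices that are the tail of at least one chord,'' but in an unrooted, unoriented outerplanar map a chord edge has no intrinsic tail; you have not said how to orient chords. This matters: a vertex can simultaneously be the source of one chord (hence black) and the target of several others, so neither ``incident to a chord'' nor ``incident to exactly one chord'' recovers the colouring. One also has to check that the number of chord edges equals the number of black vertices, i.e.\ that each black vertex is the source of \emph{exactly} one chord, which is true in the forward direction but must be matched by a one-chord-per-black-vertex orientation in the inverse. The correct rule (roughly: of the two endpoints of a chord, the black one is the one visited later along the rightmost branch of the subtree hanging from their nearest common ancestor, equivalently the one farther from the root in the extracted tree) is precisely the kind of detail the original paper supplies; your sketch would be improved by at least naming the orientation convention rather than leaving ``tail'' undefined.
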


Such a bijection, together with the fact that the scaling limit for plane trees is the CRT, constitutes the basis for our future discussion; notice that, however, it is not at all clear how the colouring affects the metric in the switch from tree to map: distances between corresponding vertices are, in general, smaller when computed on the map (if two vertices are adjacent in the tree then they are in the corresponding map, but not vice-versa). This means we cannot easily employ the result for plane trees to make deductions on outerplanar maps. Most of the following sections will develop ways to control the outerplanar map metric via easily readable information about its corresponding tree.

\section{Rough localisation of geodesics}\label{preliminary lemmas}

We delve now into the central problem of the rather unclear relationship between distances on an outerplanar map and distances on its corresponding plane tree: given a well bicoloured tree $\tau$, we wish to compute distances on the map $\Psi(\tau)$. We restrict ourselves, in this section and many of the subsequent ones, to distances from the root vertex; in Section \ref{conclusion}, before the proof of Theorem 1.1, we will give a way to bound distances between arbitrary vertices of an outerplanar map with a function of distances from the root. In what follows, geodesics to the root in $\Psi(\tau)$ are built in a step-by-step manner, using local infomation about the tree structure of $\tau$ and its colouring.

\medskip

We shall refer to geodesics of $\Psi(\tau)$ as map-geodesics; since $\tau$ and $\Psi(\tau)$ have the same vertex set, any path in $\Psi(\tau)$ can be interpreted as a sequence of vertices $v_0\ldots v_n$ of $\tau$, where for each $i$ between 0 and $n-1$ vertices $v_i$ and $v_{i+1}$ are either neighbours in $\tau$ (parent and child, in any order), or a black vertex and its target (again, a priori, in any order).

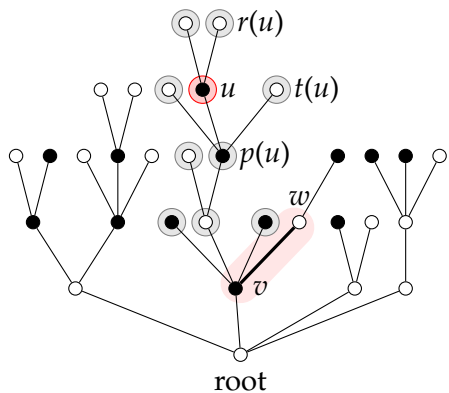
\begin{figure}
\centering
\begin{tikzpicture}[node distance=20 pt, b/.style={circle,inner sep=0pt,minimum size=5pt, draw=black, fill=black}, w/.style={circle,inner sep=0pt,minimum size=5pt, draw=black, fill=white}, g/.style={circle,inner sep=0pt,minimum size=5pt, draw=black, fill=gray, parent anchor=north, child anchor=south}, r/.style={densely dashed,red}]
\begin{forest} for tree={l=25pt, grow'=90}
[,w, name=x, label=below:root, use as bounding box[,w [,b,[,w,][,b,,name=l]][,b,[,w][,b,[,w][,w]][,w]]][,b,label=right:$v$[,b,tikz={\begin{pgfonlayer}{background}\draw[red!10, line width=15pt, line cap=round] (!u)--(!ul);\end{pgfonlayer}}][,w[,w][,b,label=right:$p(u)$[,w,tikz={\begin{pgfonlayer}{background}\node[draw=red, fill=red!20, circle] at (!n) {};\node[draw=gray, fill=gray!20, circle] at () {};\node[draw=gray, fill=gray!20, circle] at (!ul) {};\node[draw=gray, fill=gray!20, circle] at (!nl) {};\node[draw=gray, fill=gray!20, circle] at (!u) {};\node[draw=gray, fill=gray!20, circle] at (!uu) {};\node[draw=gray, fill=gray!20, circle] at (!uuu1) {};\node[draw=gray, fill=gray!20, circle] at (!uuu3) {};\node[draw=gray, fill=gray!20, circle] at (!uu1) {};\node[draw=gray, fill=gray!20, circle] at (!n1) {};\end{pgfonlayer}}][,b,label=right:$u$[,w][,w,label=right:$r(u)$,]][,phantom][,w,label=right:$t(u)$]]][,b][,w,label=above:$w$,edge=very thick,[,phantom][,phantom][,b]]][,phantom][,w,name=y[,b][,w]][,w[,w[,b][,b][,w]]]]
\end{forest}
\useasboundingbox (0,5);
\end{tikzpicture}
\vspace{10pt}
\caption{$(v,w)$ is a separating pair for $u$ (also, it is a separating pair for $r(u)$, $t(u)$, $p(u)$, or indeed for any of the circled vertices).}\label{separating pair}
\end{figure}

We need some additional notation: for each vertex $u$ of $\tau$, if $u$ is not the root, we write $p(u)$ for its parent; if $u$ has children in $\tau$, we call $r(u)$ its rightmost child; finally, if $u$ is a black vertex, we write $t(u)$ for its target. Also, given three vertices $u$, $v$ and $w$ of $\tau$, we say that $(v,w)$ is a \emph{separating pair} for $u$ (from the root of the tree) if $v$ is a strict ancestor of $u$ and $w$ is a child of $v$ lying to the right of $u$; see Figure \ref{separating pair}.

We write $d_M(u,v)$ for the graph distance between vertices $u$ and $v$ \emph{in the map} $\Psi(\tau)$, and simply write $d_M(u)$ for $d_M(u,\root)$, where $\root$ is the root of $\tau$. We are ready to prove the following:

\begin{prop}\label{vlemma}
In a well bicoloured tree $\tau$, let $(v,w)$ be a separating pair for $u$. Then any map-geodesic from $u$ to the root of $\tau$ will pass through $v$ or $w$ (possibly both). Consequently, any map-geodesic from $u$ to the root can be constructed by starting from $u$ and iteratively applying one of the maps $t$, $r$ or $p$, so that $r$ or $p$ are applied to white vertices, and $p$ or $t$ are applied to black vertices.
\end{prop}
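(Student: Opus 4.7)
The plan is to prove the two assertions of the proposition in turn: the first is a vertex-connectivity statement about $\Psi(\tau)$, and the second follows from it by an exchange-of-paths argument.

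For the first assertion it suffices to check that $\{v,w\}$ is a vertex cut separating $u$ from the root in $\Psi(\tau)$. Let $v'$ be the child of $v$ on the tree path to $u$ (possibly $v'=u$), and list the children of $v$ from left to right as $c_1,\ldots,c_k$, with $v'=c_i$ and $w=c_j$ for some $j>i$. I would partition $V(\tau)\setminus\{v,w\}$ as $A\sqcup B$, where $A$ is the union of the subtrees rooted at $c_1,\ldots,c_{j-1}$ (so that $u\in A$) and $B$ is everything else (so that the root lies in $B$; if $v$ is itself the root the claim is immediate since the root is then removed). Tree edges of $\tau$ joining $A$ to $B$ must be incident to $v$ or $w$ by the tree property. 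For edges added by $\Psi$ I would use the equivalent description of targets: $t(x)$ is the right sibling of the first ancestor of $x$ (including $x$) that has a right sibling---an immediate reformulation of ``next unrelated vertex in clockwise contour'', well defined for every black $x$ precisely because of the well-bicoloured condition. A case analysis on the position of the black source $x$ then shows that every added edge joining $A$ to $B$ is incident to $v$ or to $w$: if $x$ lies in the subtree of some $c_\ell$ with $\ell<j-1$, its target is either in that same subtree or equal to $c_{\ell+1}\in A$; if $\ell=j-1$, the only way out is $t(x)=c_j=w$; and if $x\in B$, the same ``targets always point strictly to the right'' principle places $t(x)$ again in $B$ unless it equals $v$ or $w$. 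This establishes the cut claim and hence the first assertion.

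For the second assertion I would examine the first step $u\to u^{*}$ of a map-geodesic $\gamma$ from $u$ to the root and rule out every $u^{*}$ outside the announced list. For each forbidden $u^{*}$ a separating pair is readily available: if $u^{*}$ is a child of $u$ different from $r(u)$, the pair $(u,r(u))$ separates $u^{*}$; if $u^{*}$ is a black vertex with $t(u^{*})=u$, the pair $(p(u),u)$ separates $u^{*}$; finally, for the rightmost child $r(u)$ of a \emph{black} $u$, we take the pair $(p(y_i),t(u))$, where $y_i$ denotes the first ancestor-or-self of $u$ with a right sibling (well defined because $u$ is black and the rightmost branch of $\tau$ is white). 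By the first assertion, the continuation of $\gamma$ from $u^{*}$ must pass through one of the two vertices of the chosen pair; since $\gamma$ is a geodesic and therefore simple, it cannot revisit $u$, which uniquely forces the remaining vertex in the first two scenarios. Comparing the length of $\gamma$ with the alternative route from $u$ to that forced vertex via a single tree edge (to $r(u)$ or $p(u)$), or, in the third scenario, with the route $u\to t(u)\to\cdots$ using the added edge, yields a strictly shorter path to the root, contradicting the geodesic assumption. Iterating the same argument at each successive vertex of $\gamma$ produces the announced step-by-step description of the geodesic.

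The main obstacle is the case analysis underlying the first assertion: one has to verify that the very specific form of the added edges, combined with the fact that targets never point to the left in contour order, really does prevent any crossing of the cut $\{v,w\}$ outside of $v$ and $w$ themselves. Once this is secured, the subsequent exchange argument---though involving several sub-cases corresponding to the different possible ``forbidden'' first moves---is entirely routine.
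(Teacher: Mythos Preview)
Your proposal is correct and follows essentially the same approach as the paper. The first assertion is handled identically in spirit: your set $A$ is exactly the paper's set $S$ of strict descendants of $v$ lying strictly left of $w$, and your edge-by-edge case analysis matches the paper's observation that children of $S$ stay in $S$ and that a black vertex and its target are either both in $S$ or both outside, except when the target is $w$. For the second assertion the paper phrases things via the distance inequality $d_M(u)\geq\max\{d_M(v),d_M(w)\}$ rather than your path-exchange/simplicity argument, but the separating pairs used---$(u,r(u))$, $(p(u),u)$, and $(p(t(u)),t(u))$---are the same, and the only non-trivial sub-case (ruling out $r(u)$ when $u$ is black) is resolved in both by noting that $p(t(u))$ is at map-distance $\leq 2$ from $u$ but $\geq 2$ from any child of $u$; you gloss over this last inequality, so it would be worth making it explicit.
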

\noindent{\it Proof.} Let $S$ be the set of (strict) descendants of $v$ that lie strictly to the left of $w$, and $u_0 u_1 \ldots u_n$ a map-geodesic such that $u=u_0$ and $u_n=\root$ is the root of the tree. Clearly, $u\in S$ and $u_n \notin S$; take the minimum $i$ such that $u_i\notin S$, and consider its relation to $u_{i-1}$. Children of elements in $S$ are in $S$, and for any pair $(x,t(x))$ in the tree (where $x$ is a black vertex), either both vertices belong to $S$ or neither does, except for the case where $x \in S$ and $t(x)=w$. Hence either $u_i$ is the parent of $u_{i-1}$ (therefore $u_i=v$) or $u_i$ is the target of $u_{i-1}$, which can only be the case if $u_i=w$, and thus is the first part of the proposition established.

\begin{wrapfigure}{R}{5cm}
\hspace{-55pt}
\begin{tikzpicture}[node distance=20 pt, b/.style={circle,inner sep=0pt,minimum size=5pt, draw=black, fill=black}, w/.style={circle,inner sep=0pt,minimum size=5pt, draw=black, fill=white}, g/.style={circle,inner sep=0pt,minimum size=5pt, draw=black, fill=gray, parent anchor=north, child anchor=south}, r/.style={densely dashed,red}]

\begin{forest} for tree={l=10pt, grow'=90}
[,b, label=left:$v$, name=x[,w [,b,tikz={\draw[r] ()--(!ul);}[,w][,b,,name=l,tikz={\draw[r] ()--(!uul);}]][,b,tikz={\draw[red,very thick] ()--(!un);}[,w,name=y,label=above:$u$,edge=very thick][,b,tikz={\draw[r] ()--(!n);}[,w][,w]][,w]]][,b,tikz={\draw[red,very thick] ()--(!nn);} [,b,tikz={\draw[r] ()--(!n);}][,w[,w][,b,tikz={\draw[r] ()--(!un);}]][,b,tikz={\draw[r] ()--(!unn);}]][,phantom][,w,label=below:$w$,name=z[,phantom][,b,edge=very thick,tikz={\draw[red,very thick,->] ()--+(10pt,-40pt);}]]]
\end{forest}
\begin{pgfonlayer}{background}
\node (R) [fill=gray!10,rounded corners=30pt,yshift=25pt,xshift=48pt, fit=(y)(z)(x)(l), use as bounding box]{};
\node[fill=red!10,circle,inner sep=2pt, yshift=6pt,xshift=65pt, fit=(y)]{};
\draw[red!10, line width=12pt, line cap=round] (x)+(66pt,6pt)--(105pt,25pt);
\node (dots) at (66pt,-10pt) {$\ldots$};
\draw (x)+(66pt,6pt)--(dots);
\draw[r] (x)+(66pt,6pt)--(0pt,16pt);
\end{pgfonlayer}
\node [draw=gray!10, very thick, right of=R,xshift=-96pt,yshift=24pt, circle, fill=white, inner sep=3pt] {$S$};
\end{tikzpicture} 
\end{wrapfigure}
Let us consider what this implies in term of distances: if $(v,w)$ is a separating pair for $u$, then $d_M(u)>d_M(v)$ or $d_M(u)>d_M(w)$; that is, since $|d_M(v)-d_M(w)|\leq 1$, $d_M(u)\geq \max\{d_M(v),d_M(w)\}$. Now, consider a map-geodesic from $u$ to the root. If $x$ is a child of $u$ distinct from $r(u)$, then such a geodesic does not go through $x$, because $(u,r(u))$ is a separating pair for $x$, and thus $d_M(x)\geq d_M(u)$ (whereas, if there were a geodesic from $u$ to the root that involved $x$, we would have $d_M(x)<d_M(u)$). If $u$ is the target of some $y$, then consider $p(u)$, which must be an ancestor of $y$: $(p(u),u)$ is a separating pair for $y$, and thus $d_M(y)\geq d_M(u)$, so a map-geodesic from $u$ to the root does not go through $y$. Only three possibilities remain: either the geodesic moves from $u$ to $p(u)$, or to $t(u)$, or to $r(u)$.

Clearly, a white vertex $u$ in a map-geodesic to the root will be followed by either $r(u)$ or $p(u)$, since it is not directly connected to any target in the map. Suppose, on the contrary, that $u$ is a black vertex; it will be followed in a map-geodesic to the root by $p(u)$ or $t(u)$, never by a child: this is because $(p(t(u)),t(u))$ is a separating pair for $u$; if the map-geodesic does not move from $u$ to $t(u)$, then it passes through $p(t(u))$, which has map-distance at most 2 from $u$, and at least 2 from any child of $u$ (it cannot be the target of one, since children of $u$ have $t(u)$ or other children of $u$ as targets, and is not connected to strict descendants of $u$ in the tree); as a consequence, the map-geodesic does not go through $r(u)$.\qed

\section{An algorithm to compute map-distances}\label{algorithm}

Suppose we have a well bicoloured tree $\tau$ and a vertex $x$ of $\tau$ of height $n$; we know that $d_M(x)$, the map-distance between $x$ and the root of $\tau$, is no more than $n$. From now on, we write $d(\tau, x)$ for the map distance $d_M(x)$ between $x$ and the root of $\tau$. We propose to compute $d(\tau,x)$ via a recursive algorithm which takes the pair $(\tau, x)$ as input and outputs a pair $(\tau', x')$, where $x'$ is a vertex of $\tau'$ such that $d(\tau',x')=d(\tau,x)-1$; this way the number of iterations needed for the algorithm to output a well bicoloured tree and its root is exactly the length of a geodesic path from $x$ to the root of $\tau$.

Given $(\tau,x)$, consider the path \emph{in the tree} leading from $x$ to the root; thanks to Proposition~\ref{vlemma} we know that a map-geodesic from $x$ to the root cannot involve any of the vertices that lie strictly to the left of this path (parents, targets and rightmost children of vertices that are part of the path or lie to the right of it cannot lie to its left). We may thus safely erase all such vertices from $\tau$, and we will always output pairs $(\tau', x')$ such that no vertices lie strictly to the left of the tree path leading from $x'$ to the root of $\tau'$.

In what follows, given a tree $\tau$ and a vertex $x$, we will write $\tau^x$ for the subtree of $\tau$ formed by $x$ and its descendants; given a tree $\tau$ and a subtree $\tilde{\tau}$ (which does not include the root of $\tau$), we will write $\tau\setminus \tilde{\tau}$ for the rooted plane tree obtained from $\tau$ by erasing all vertices of $\tilde{\tau}$ and any edges adjacent to those vertices.

The description of the algorithm follows.
\\\\
\begin{tabularx}{\textwidth}{p{.7cm}X}
{\tikz[baseline=10pt]
{\path[use as bounding box] (-12pt,-15pt) rectangle (12pt,20pt);
\node[circle,inner sep=0pt,minimum size=4pt,fill=white,draw=black,label=above:$x$] (1) {};
\node[above of=1,yshift=-20pt] (2) {};
\node[circle,inner sep=0pt,minimum size=4pt,fill=gray,draw=black,label=left:$p(x)$,very thick,yshift=-15pt] (3) {};
\node[yshift=-25pt] (d) {$\ldots$};\draw (1)-- (3)--(d);
\begin{pgfonlayer}{background}
\node[ellipse, fit=(1)(2), inner sep=1pt,fill=red!10, draw=black, densely dashed] (ellipse) {\phantom{$T^x$}};
\node at (ellipse.north) {\raisebox{3pt}{\ding{33}}};
\end{pgfonlayer}
}
}
 &
\textbf{[w\textsubscript{0}]} Suppose $x$ is a white leaf of $\tau$; then a map-geodesic to the root necessarily moves from $x$ to $p(x)$ (hence $d(\tau,p(x))=d(\tau,x)-1$). Consider the tree $\tau'=\tau\setminus x$ and the pair $(\tau', p(x))$ (where we still write $p(x)$ for the obvious image of the original vertex of $\tau$ in $\tau'$); it is clear that $d(\tau', p(x))=d(\tau, p(x))$ (because the only vertex removed is $x$, which was further from the root than $p(x)$) and so it equals $d(\tau,x)-1$.
\\
\end{tabularx}\\[12pt]
\begin{tabularx}{\textwidth}{p{.7cm}X}
{\tikz[baseline=15pt]
{
\path[use as bounding box] (-12pt,-15pt) rectangle (12pt,20pt);
\node[circle,inner sep=0pt,minimum size=4pt,fill=white,draw=black,label=left:$x$] (1) {};
\node[yshift=-15pt] (d) {$\ldots$};
\draw (1)-- (d);
\begin{pgfonlayer}{background}
\node[ellipse callout, draw, inner sep=2pt, fill=gray!10!white, callout absolute pointer={(1)}, above of=1, yshift=-10pt, callout pointer arc=110] {\phantom{$x$}};
\end{pgfonlayer}
}
}
 &  \textbf{[w\textsubscript{\textgreater0}]} If $x$ is a white vertex of $\tau$ and has offspring, the matter is more complicated. Proposition \ref{vlemma} ensures that a geodesic to the root moves to either $r(x)$ or $p(x)$, but it is not clear which: we need to distinguish two cases. 
\\
\end{tabularx}
\\[12pt]
\begin{tabularx}{\textwidth}{>{\raggedleft\arraybackslash}p{1.5cm}X}
\pgfdeclarelayer{background}
\pgfdeclarelayer{foreground}
\pgfsetlayers{background,main,foreground}
{\tikz[baseline=20pt]
{
\begin{pgfonlayer}{foreground}
\path[use as bounding box] (-12pt,-15pt) rectangle (12pt,28pt);
\node[circle,inner sep=0pt,minimum size=4pt,fill=white,draw=black,label=left:$x$,xshift=-5pt] (1) {};
\node[circle,inner sep=0pt,minimum size=4pt,yshift=-15pt,fill=gray,draw=black,label=left:$p(x)$,very thick] (2) {};
\node[circle,inner sep=0pt,minimum size=4pt,xshift=20pt,yshift=-10pt,fill=gray,draw=black, label=below:\scriptsize$r(p(x))$] (3) {};
\node[yshift=-25pt] (d) {$\ldots$};
\draw (1)-- (2)--(d);
\draw (2)--(3);
\end{pgfonlayer}
\node[ellipse callout, draw, inner sep=2pt, fill=gray!10!white, callout absolute pointer={(1)}, above of=1, yshift=-10pt, callout pointer arc=110] (tx) {\phantom{$x$}};
\node[ellipse callout, draw, inner sep=2pt, fill=gray!10!white, callout absolute pointer={(2)}, above right of=2, yshift=-10pt, xshift=-10pt, callout pointer arc=110] {\phantom{$x$}};
\node[ellipse callout, draw, inner sep=2pt, fill=gray!10!white, callout absolute pointer={(3)}, above of=3, yshift=-10pt, callout pointer arc=110] {\phantom{$x$}};
\begin{pgfonlayer}{background}
\node[ellipse, fit=(1)(tx), fill=red!90, draw=black, densely dashed, inner sep=0pt, fill opacity=0.1] (ellipse) {};
\node at (ellipse.north) {\raisebox{3pt}{\ding{33}}};
\end{pgfonlayer}
}
}
 & \textbf{[w\textsubscript{\textgreater0}.1]} Suppose that $x\neq r(p(x))$, that is $x$ has some right siblings; then $(p(x), r(p(x)))$ is a separating pair for $r(x)$, and thus $d(\tau, p(x))\leq d(\tau, r(x))$; hence there is a geodesic moving from $x$ to $p(x)$, and $d(\tau, p(x))=d(\tau, x)-1$. We choose to follow such a geodesic and define $\tau'$ to be $\tau \setminus \tau^x$, and output $(\tau', p(x))$ so that, as before, we have $d(\tau',p(x))=d(\tau,x)-1$.
\\
\end{tabularx}
\\[12pt]
\begin{tabularx}{\textwidth}{>{\raggedleft\arraybackslash}p{1.5cm}X}
\pgfdeclarelayer{background}
\pgfdeclarelayer{foreground}
\pgfsetlayers{background,main,foreground}
{\tikz[baseline=30pt]
{
\begin{pgfonlayer}{foreground}
\path[use as bounding box] (-5pt,-15pt) rectangle (25pt,40pt);
\node[circle,inner sep=0pt,minimum size=4pt,fill=white,draw=black,label=left:{$x$}] (1) {};
\node[circle,inner sep=0pt,minimum size=4pt,yshift=-15pt,fill=gray,draw=black,label=right:\scriptsize$p(x)$,very thick] (2) {};
\node[circle,inner sep=0pt,minimum size=4pt,yshift=10pt,xshift=15pt,fill=gray,draw=black,label=right:\scriptsize$r(x)$,very thick] (3) {};
\node[yshift=-25pt] (d) {$\ldots$};
\draw (3)--(1)-- (2)--(d);
\draw[red, <->] (3) .. controls (10pt,-10pt)  .. (2);
\end{pgfonlayer}
\node[ellipse callout, draw, inner sep=2pt, fill=gray!10!white, callout absolute pointer={(1)}, above of=1, yshift=-10pt, callout pointer arc=110] (tx) {\phantom{$x$}};
\node[ellipse callout, draw, inner sep=2pt, fill=gray!10!white, callout absolute pointer={(3)}, above of=3, yshift=-10pt, callout pointer arc=110] {\phantom{$x$}};
\begin{pgfonlayer}{background}
\node[ellipse, fit=(1)(tx), fill=red!90, draw=black, densely dashed, inner sep=0pt, fill opacity=0.1] (ellipse) {};
\node at (ellipse.north) {\raisebox{3pt}{\ding{33}}};
\end{pgfonlayer}
}
}
 & \textbf{[w\textsubscript{\textgreater0}.2]} Suppose now that $x=r(p(x))$. We build the new tree $\tau'$ by erasing $\tau^x$ from the original tree and rerooting the subtree $\tau^{r(x)}$ onto $p(x)$, by identifying $p(x)$ with $r(x)$, thus merging them into a single vertex $y$; the colour of $y$ is set to white if and only if both $p(x)$ and $t(x)$ were white in $\tau$. We need to show that $d(\tau', y)=d(\tau,x)-1$: there is an obvious map sending vertices of $\tau$ that are not in $\tau^x\setminus \tau^{r(x)}$ to vertices of $\tau'$; the map is 1-on-1 with the exceptions of $p(x)$ and $r(x)$, which are both sent to $y$. Neighbours in $\tau$ are sent to neighbours in $\tau'$, and (since $x=r(p(x))$ in $\tau$) the target of a vertex in $\tau$ becomes the target of its image in $\tau'$: hence $d(\tau',y)\leq \min\{d(\tau,p(x)),d(\tau, r(x))\}$. On the other hand, any map-path in $\tau'$ can be lifted to a map-path in $\tau$ (by appropriately selecting a pre-image for $y$ as first step of the path), which gives equality.
\\
\end{tabularx}
\\[12pt]
\begin{tabularx}{\textwidth}{>{\raggedleft\arraybackslash}p{.7cm}X}
{\tikz[baseline=15pt]
{
\path[use as bounding box] (-12pt,-15pt) rectangle (12pt,20pt);
\node[circle,inner sep=0pt,minimum size=4pt,fill=black,draw=black,label=left:$x$] (1) {};
\node[yshift=-15pt] (d) {$\ldots$};
\draw (1)-- (d);
\begin{pgfonlayer}{background}
\node[ellipse callout, draw, inner sep=2pt, fill=gray!10!white, callout absolute pointer={(1)}, above of=1, yshift=-10pt, callout pointer arc=110] {\phantom{$x$}};
\end{pgfonlayer}
}
}
 &  \textbf{[b]} If $x$ is a black vertex of $\tau$, then the new option of \emph{jumping} to $t(x)$ presents itself. As by Proposition \ref{vlemma}, a geodesic will either move to $p(x)$ or to $t(x)$. We need to deal with three separate cases:
\\
\end{tabularx}
\\[12pt]
\begin{tabularx}{\textwidth}{>{\raggedleft\arraybackslash}p{1.5cm}X}
\pgfdeclarelayer{background}
\pgfdeclarelayer{foreground}
\pgfsetlayers{background,main,foreground}
{\tikz[baseline=20pt]
{
\path[use as bounding box] (-12pt,-15pt) rectangle (12pt,28pt);
\begin{pgfonlayer}{foreground}
\node[circle,inner sep=0pt,minimum size=4pt,fill=black,draw=black,xshift=-15pt,label=left:$x$] (1) {};
\node[circle,inner sep=0pt,minimum size=4pt,yshift=-10pt,fill=gray,draw=black,label=left:\scriptsize$p(x)$,very thick] (2) {};
\node[circle,inner sep=0pt,minimum size=4pt,xshift=2pt,fill=gray,draw=black] (3) {};
\node[circle,inner sep=0pt,minimum size=4pt,xshift=10pt,yshift=-5pt,fill=gray,draw=black] (4) {};
\node[yshift=-20pt] (d) {$\ldots$};
\draw (1)-- (2)--(d);
\draw (4)--(2)--(3);
\end{pgfonlayer}
\node[ellipse callout, draw, inner sep=2pt, fill=gray!10!white, callout absolute pointer={(1)}, above of=1, yshift=-10pt, callout pointer arc=110] (tx) {\phantom{$x$}};
\node[ellipse callout, draw, inner sep=2pt, fill=gray!10!white, callout absolute pointer={(4)}, above of=4, yshift=-10pt, callout pointer arc=110] {\phantom{$x$}};
\node[ellipse callout, draw, inner sep=2pt, fill=gray!10!white, callout absolute pointer={(3)}, above of=3, yshift=-10pt, callout pointer arc=110] {\phantom{$x$}};
\begin{pgfonlayer}{background}
\node[ellipse, fit=(1)(tx), fill=red!90, draw=black, densely dashed, inner sep=0pt, fill opacity=0.1] (ellipse) {};
\node at (ellipse.north) {\raisebox{3pt}{\ding{33}}};
\end{pgfonlayer}
}
}
 & \textbf{[b.1]} Suppose $x$, $t(x)$ and $r(p(x))$ are distinct: that is to say, $x$ has at least two right siblings; in this case, $(p(x), r(p(x)))$ is a separating pair for $t(x)$, hence there is a geodesic moving from $x$ to $p(x)$; we thus set $\tau'$ to be $\tau \setminus \tau^x$, and output $(\tau', p(x))$.
\\
\end{tabularx}
\\[12pt]
\begin{tabularx}{\textwidth}{>{\raggedleft\arraybackslash}p{1.5cm}X}
\pgfdeclarelayer{background}
\pgfdeclarelayer{foreground}
\pgfsetlayers{background,main,foreground}
{\tikz[baseline=20pt]
{
\path[use as bounding box] (-12pt,-15pt) rectangle (12pt,28pt);
\begin{pgfonlayer}{foreground}
\node[circle,inner sep=0pt,minimum size=4pt,fill=black,draw=black,xshift=-10pt,label=left:$x$] (1) {};
\node[circle,inner sep=0pt,minimum size=4pt,yshift=-10pt,fill=gray,draw=black,label=left:$p(x)$,very thick] (2) {};
\node[circle,inner sep=0pt,minimum size=4pt,xshift=10pt,fill=gray,draw=black,very thick] (3) {};
\node[yshift=-20pt] (d) {$\ldots$};
\draw (1)-- (2)--(d);
\draw[red, <->] (2)--(3);
\end{pgfonlayer}
\node[ellipse callout, draw, inner sep=2pt, fill=gray!10!white, callout absolute pointer={(1)}, above of=1, yshift=-10pt, callout pointer arc=110] (tx) {\phantom{$x$}};
\node[ellipse callout, draw, inner sep=2pt, fill=gray!10!white, callout absolute pointer={(3)}, above of=3, yshift=-10pt, callout pointer arc=110] {\phantom{$x$}};
\begin{pgfonlayer}{background}
\node[ellipse, fit=(1)(tx), fill=red!90, draw=black, densely dashed, inner sep=0pt, fill opacity=0.1] (ellipse) {};
\node at (ellipse.north) {\raisebox{3pt}{\ding{33}}};
\end{pgfonlayer}
}
}
 & \textbf{[b.2]} Suppose now that $x$ has only one right sibling, which is therefore $t(x)$ as well as $r(p(x))$. We build the new tree $\tau'$ by simply erasing $\tau^x$ and identifying vertices $p(x)$ and $t(x)$, merging them into a single vertex $y$ to be coloured white if and only if both of the original vertices were white in $\tau$, and output $(\tau',y)$. We have $d(\tau',y)=d(\tau,x)-1$ by roughly the same argument as previously.
\\
\end{tabularx}
\\[12pt]
\begin{tabularx}{\textwidth}{>{\raggedleft\arraybackslash}p{1.5cm}X}
\pgfdeclarelayer{background}
\pgfdeclarelayer{foreground}
\pgfsetlayers{background,main,foreground}
{\tikz[baseline=10pt]
{
\path[use as bounding box] (-12pt,-15pt) rectangle (20pt,28pt);
\begin{pgfonlayer}{foreground}
\node[circle,inner sep=0pt,minimum size=4pt,fill=black,draw=black,label=left:$x$] (1) {};
\node[circle,inner sep=0pt,minimum size=4pt,yshift=-10pt,fill=gray,draw=black,label=left:$p(x)$] (2) {};
\node[yshift=-20pt] (d) {\scriptsize$\ldots$};
\node[circle,inner sep=0pt,minimum size=4pt,yshift=-30pt,fill=gray,draw=black,label=left:\scriptsize$p(t(x))$] (3) {};
\node[circle,inner sep=0pt,minimum size=4pt,yshift=-22pt,xshift=15pt,fill=gray,draw=black,label=below:\scriptsize$t(x)$,very thick] (4) {};
\draw (1)--(2)--(d)--(3)--(4);
\end{pgfonlayer}
\node[ellipse callout, draw, inner sep=2pt, fill=gray!10!white, callout absolute pointer={(1)}, above of=1, yshift=-10pt, callout pointer arc=110] (tx) {\phantom{$x$}};
\node[ellipse callout, draw, inner sep=2pt, fill=gray!10!white, callout absolute pointer={(4)}, above of=4, yshift=-18pt, xshift=8pt, callout pointer arc=110] (ttx) {\phantom{$x$}};
\node[yshift=5pt] at (tx.north) {\raisebox{3pt}{\ding{33}}};
\begin{pgfonlayer}{background}
\draw[dashed, right color=red!20, left color=red!2] (tx.north)+(-10pt,5pt) .. controls (35pt,30pt) and (20pt,-30pt) .. (-10pt,-25pt);
\end{pgfonlayer}
}
}
 & \textbf{[b.3]} The last case is that of $x$ being the rightmost child of its parent; in this case $(p(t(x)),t(x))$ is a separating pair for $p(x)$, so $d(\tau, t(x))\leq d(\tau, p(x))$, and we may assume a geodesic to the root does jump from $x$ to its target. We can thus build $\tau'$ by erasing \emph{all that lies left of $t(x)$}, and output $(\tau',t(x))$.\\
\end{tabularx}\\[12pt]

Notice that, in all cases listed except for the very last one, the output vertex $x'$ has height $|x|-1$ in $\tau'$, whereas the jump made in the last case (the one marked [b.3]) may lead to a vertex $x'$ of arbitrarily smaller height. Also, the information on $\tau$ that the algorithm uses to select the appropriate $\tau'$ is entirely local (child structure of $x$ and its parent) with the exception of the last case, which requires to make changes to parts of $\tau$ that are, a priori, arbitrarily far from $x$.

In the spirit of making each step by the algorithm entirely determined by local information, which will in turn entail precious independence properties as soon as we switch to a random setting, we add extra data to inputs and outputs: we let the algorithm run on triples of the form $(\tau, x, s)$, where $s$ is one of four \emph{states}, and output a triple $(\tau',x',s')$.

Three of the states simply mimic the cases listed above: we call them $w_0$, $w_{>0}$ and $b$; a fourth state, labelled $j$ for \emph{jump}, is devised to deal specifically with situations that fall under case [b.3]: the idea is that, instead of simply outputting an entirely different tree paired with the target of the jump, the algorithm goes into a \emph{jump} state; it proceeds modifying the tree a little at a time until it reaches the original target, at which point it `lands' in one of the three non-jump states.

We propose to re-define outputs according to the state of the input triple, keeping in mind that they mostly adhere to the preceding description for pairs; if the output state in the triple $(\tau',x',s')$ is known \emph{not} to be $j$, then it is determined by $(\tau',x')$. The initial state, in particular, is not jump: given $\tau$ and a vertex $x$, it can be determined as being
\begin{itemize}
\item[$\circ$] $w_0$, if $x$ is a white leaf of $\tau$;
\item[$\circ$] $w_{>0}$, if $x$ is white and it is not a leaf in $\tau$;
\item $b$, if $x$ is black.
\end{itemize}

The behaviour of the algorithm for an input triple of the form $(\tau, x, w_0)$ or $(\tau, x, w_{>0})$, and for input triples $(\tau, x, b)$ with $x$ having one or more right siblings, is exactly that described in cases [w\textsubscript{0}], [w\textsubscript{\textgreater0}.1], [w\textsubscript{\textgreater0}.2], [b.2] and [b.1]. Namely, the tree $\tau'$ and a vertex $x'$ are produced, and the state $s'$ is selected again among the three non-jump states ($w_0$, $w_{>0}$, $b$) according to the colour and degree of the output vertex $x'$ (white leaf, white non-leaf, black). 

\pgfdeclarelayer{background}
\pgfdeclarelayer{foreground}
\pgfsetlayers{background,main,foreground}

We now describe the behaviour of the algorithm when the input is of the form $(\tau,x,b)$, and $x$ is the rightmost child of its parent. As explained earlier (case [b.3]), the next vertex in a map-geodesic to the root would be (without loss of generality) $t(x)$. We distinguish yet two subcases.
\\[12pt]
\begin{tabularx}{\textwidth}{>{\raggedleft\arraybackslash}p{1.5cm}X}
{\tikz[baseline=10pt]
{
\path[use as bounding box] (-12pt,-15pt) rectangle (18pt,28pt);
\begin{pgfonlayer}{foreground}
\node[circle,inner sep=0pt,minimum size=4pt,fill=black,draw=black,label=left:$x$] (1) {};
\node[circle,inner sep=0pt,minimum size=4pt,yshift=-10pt,fill=gray,draw=black,label=left:$p(x)$] (2) {};
\node[circle,inner sep=0pt,minimum size=4pt,yshift=-20pt,fill=gray,draw=black] (3) {};
\node[circle,inner sep=0pt,minimum size=4pt,yshift=-12pt,xshift=13pt,fill=gray,draw=black,very thick] (4) {};
\node[yshift=-28pt] (d) {$\ldots$};
\draw (1)-- (2)--(3)--(d);
\draw (3)--(4);
\end{pgfonlayer}
\node[ellipse callout, draw, inner sep=2pt, fill=gray!10!white, callout absolute pointer={(1)}, above of=1, yshift=-10pt, callout pointer arc=110] (tx) {\phantom{$x$}};
\node[ellipse callout, draw, inner sep=2pt, fill=gray!10!white, callout absolute pointer={(4)}, above of=4, yshift=-10pt, callout pointer arc=110] {\phantom{$x$}};
\begin{pgfonlayer}{background}
\node[ellipse, fit=(1)(tx), fill=red!90, draw=black, densely dashed, inner sep=2pt, fill opacity=0.1,yshift=-4pt] (ellipse) {};
\node at (ellipse.north) {\raisebox{3pt}{\ding{33}}};
\end{pgfonlayer}
}
}
 & \textbf{[b.3.1]} If $t(x)$ has height $|x|-1$ (that is, if $p(x)$ has a right sibling: again, a local property) then define $\tau'$ by erasing $\tau^{p(x)}$ from $\tau$ and output $(\tau', t(x), s')$, where $s'$ is -- again -- one of $w_0$, $w_{>0}$, $b$, according to properties of $t(x)$ in $\tau'$.\\
\end{tabularx}\\[12pt]
\begin{tabularx}{\textwidth}{>{\raggedleft\arraybackslash}p{1.5cm}X}
{\tikz[baseline=20pt]
{
\begin{pgfonlayer}{foreground}
\path[use as bounding box] (-12pt,-15pt) rectangle (12pt,28pt);
\node[circle,inner sep=0pt,minimum size=4pt,fill=black,draw=black,label=left:$x$] (1) {};
\node[circle,inner sep=0pt,minimum size=4pt,yshift=-12pt,fill=gray,draw=black,label=left:$p(x)$,very thick] (2) {};
\node[circle,inner sep=0pt,minimum size=4pt,yshift=-22pt,fill=gray,draw=black] (3) {};
\node[yshift=-30pt] (d) {$\ldots$};
\draw (1)-- (2)--(3)--(d);
\end{pgfonlayer}
\node[ellipse callout, draw, inner sep=2pt, fill=gray!10!white, callout absolute pointer={(1)}, above of=1, yshift=-10pt, callout pointer arc=110](tx) {\phantom{$x$}};
\begin{pgfonlayer}{background}
\node[ellipse, fit=(1)(tx), fill=red!90, draw=black, densely dashed, inner sep=0pt, fill opacity=0.1] (ellipse) {};
\node at (ellipse.north) {\raisebox{3pt}{\ding{33}}};
\end{pgfonlayer}
}
}
 & \textbf{[b.3.2]} If, however, $p(x)$ has no right siblings, then it is time to finally put the jump state $j$ to use. We define $\tau'$ to be $\tau \setminus \tau^x$ and output $(\tau', p(x), j)$. Notice that now $p(x)$ has height $|x|-1$ in $\tau'$, and that the identity of $t(x)$ can still be recovered in $\tau'$ (even though vertex $x$ has been erased) by the sole knowledge of $p(x)$: if $p(x)$ were black, $t(x)$ would be the target of $p(x)$.
\\
\end{tabularx}\\[12pt]
We finally give instructions for the algorithm to follow when confronted with a jump state. Suppose we have an input $(\tau,x,j)$; \\
\begin{tabularx}{\textwidth}{>{\raggedleft\arraybackslash}p{1.5cm}X}
{\tikz[baseline=0pt]
{
\path[use as bounding box] (-12pt,-15pt) rectangle (12pt,28pt);
\node[circle,inner sep=0pt,minimum size=4pt,fill=gray,draw=black,label=left:$x$] (1) {};
\node[circle,inner sep=0pt,minimum size=4pt,yshift=-10pt,fill=gray,draw=black,label=left:$p(x)$,very thick] (2) {};
\node[circle,inner sep=0pt,minimum size=4pt,yshift=-20pt,fill=gray,draw=black] (3) {};
\node[yshift=-28pt] (d) {$\ldots$};
\draw (1)-- (2)--(3)--(d);
\begin{pgfonlayer}{background}
\node[ellipse, fit=(1), fill=red!90, draw=black, densely dashed, inner sep=2pt, fill opacity=0.1] (ellipse) {};
\node at (ellipse.north) {\raisebox{3pt}{\ding{33}}};
\end{pgfonlayer}
}
}
& \textbf{[j.1]} if $p(x)$ has no right siblings, then output $(\tau\setminus\tau^{x},p(x),j)$; this way, the vertex on which the geodesic should land is still the target of $p(x)$ (or would be if $p(x)$ were black) and its map-distance from the root remains unchanged;\\
\end{tabularx}\\
\begin{tabularx}{\textwidth}{>{\raggedleft\arraybackslash}p{1.5cm}X}
{\tikz[baseline=0pt]
{
\path[use as bounding box] (-12pt,-15pt) rectangle (16pt,28pt);
\node[circle,inner sep=0pt,minimum size=4pt,fill=gray,draw=black,label=left:$x$] (1) {};
\node[circle,inner sep=0pt,minimum size=4pt,yshift=-10pt,fill=gray,draw=black,label=left:$p(x)$] (2) {};
\node[circle,inner sep=0pt,minimum size=4pt,yshift=-20pt,fill=gray,draw=black] (3) {};
\node[circle,inner sep=0pt,minimum size=4pt,yshift=-12pt,xshift=12pt,fill=gray,draw=black,very thick] (4) {};
\node[yshift=-28pt] (d) {$\ldots$};
\draw (1)-- (2)--(3)--(d);
\draw (3)--(4);
\begin{pgfonlayer}{background}
\node[ellipse, fit=(1)(2), fill=red!90, draw=black, densely dashed, inner sep=2pt, fill opacity=0.1,yshift=2pt] (ellipse) {};
\node at (ellipse.north) {\raisebox{3pt}{\ding{33}}};
\node[ellipse callout, draw, inner sep=2pt, fill=gray!10!white, callout absolute pointer={(4)}, above of=4, yshift=-10pt, callout pointer arc=110] {\phantom{$x$}};
\end{pgfonlayer}
}
}
 & \textbf{[j.2]} if $p(x)$ has right siblings, then the leftmost one (call it $x'$) is the (image of the) vertex the geodesic was supposed to land on; output $(\tau\setminus\tau^{p(x)},x',s')$, with $s'$ being appropriately chosen among the three non-jump states.\\
\end{tabularx}\\[12pt]

We now summarise the key properties of the algorithm as just described: given a well bicoloured tree $\tau$ and a vertex $x$ of height $n$, we can generate a sequence of $n+1$ triples $(\tau_i,x_i,s_i)$, with $i$ ranging from $0$ to $n$, where
\begin{itemize}
\item for each $i$, $\tau_i$ is a well bicoloured tree, $x_i$ is a vertex of $\tau$ and $s_i$ is one of four states ($w_0$, $w_{>0}$, $b$, $j$);
\item $\tau_0$ is the tree $\tau$ deprived of all that lies left of the (tree) path from the root to $x$, and $x_0=x$;
\item $(\tau_{i+1},x_{i+1},s_{i+1})$ is obtained from $(\tau_i,x_i,s_i)$ as described, with changes of a `local' nature;
\item for each $i$, the height of $x_i$ in $\tau_i$ is $n-i$; consequently, $x_n$ is the root of $\tau_n$;
\item for each $i$ between 0 and $n-1$ such that $s_i\neq j$, $d(\tau_{i+1},x_{i+1})=d(\tau_i,x_i)-1$; therefore, 
\begin{equation}\label{nminusjump}d(\tau,x)=\sum_{0\leq i<n|s_i\neq j}1=n-|\{0\leq i<n|s_i= j\}|;\tag{$\star$}\end{equation}
that is, the distance $d(\tau,x)$ is the number of non-jump states appearing in the input triples on which the algorithm runs (indeed, it is also the number of non-jump states appearing in the $n$ \emph{output} triples, because $s_n$ cannot be $j$ and neither can $s_0$). We will make frequent use of this fact in what follows.
\end{itemize}

The time has come to run our algorithm on a random tree; in order to do this, an especially useful tool is the standard construction of the \emph{geometric Galton-Watson tree conditioned to survive}, which will provide us with a way of unifying results given by the algorithm for vertices of arbitrary height into a single random variable. The next section is an introduction to this tool and to some of the notation needed for further progress.

\section{The Galton-Watson tree conditioned to survive}\label{infinite tree}

In this section we briefly introduce the critical Galton-Watson tree conditioned to survive for a geometric offspring distribution; for a more general definition and further details see Section~12 of~\cite{lyons}.

We build a random infinite tree, called $T_\infty$, in the following way: consider an infinite path $v_0v_1\ldots v_n\ldots$, called the \emph{spine}; let $(\lambda^i)_{i\geq 0}$ be a sequence of independent critical geometric Galton-Watson trees (that is, with offspring distribution $\mu$, where $\mu(k)=2^{-k-1}$ for all natural numbers $k$), and let $(\rho^i)_{i\geq 0}$ be another such sequence, independent of the first; we consider the Galton-Watson trees as random rooted plane trees and, for each $i\geq 0$, attach $\lambda^i$ to the spine by identifying its root and $v_i$, so that $\lambda^i$ lies to the left of the spine; similarly, attach each $\rho^i$ so that its root is identified with $v_i$ and all of its vertices lie to the right of the spine (see Figure \ref{right and left}); finally, root the random infinite tree thus obtained in $v_0$. 

The relevance of critical geometric Galton-Watson trees lies in the fact that, if $\theta$ is one such tree and $\theta_n$ is the random tree obtained by conditioning $\theta$ on having exactly $n$ vertices, then $\theta_n$ is uniformly distributed over plane trees with $n$ vertices; as a result, the tree $T_\infty$ itself has much to do with random plane trees, as we shall now see.

\begin{figure}
\centering
\begin{subfigure}[b]{.3\textwidth}
\centering
\begin{tikzpicture}[node distance=25 pt, b/.style={circle,inner sep=0pt,minimum size=5pt,fill=black,draw=black}, w/.style={circle,inner sep=0pt,minimum size=5pt, draw=black, fill=white}, g/.style={circle,inner sep=0pt,minimum size=5pt, draw=black, fill=gray}]
\node [b, label=below :$v_3$] (3) {};
\node [above of=3] (dots) {$\ldots$};
\node [b, below of=3, label=below:$v_2$] (2) {};
\node [b, below of=2, label=below:$v_1$] (1) {};
\node [b, below of=1, label=below:$v_0$] (0) {};
\draw (dots)--(3)--(2)--(1)--(0);
\begin{pgfonlayer}{background}
\foreach \x in {0,...,3} {
\node[ellipse callout, draw, inner sep=1pt, fill=gray!10!white, callout absolute pointer={(\x)}, right of=\x, callout pointer arc=130] {$\rho^\x$};
\node[ellipse callout, draw, inner sep=2pt, fill=gray!10!white, callout absolute pointer={(\x)}, left of=\x, callout pointer arc=110] {$\lambda^\x$};}
\end{pgfonlayer}
\end{tikzpicture}
\caption{\label{right and left}}
\end{subfigure}
\begin{subfigure}[b]{.3\textwidth}
\centering
\begin{tikzpicture}[node distance=25 pt, b/.style={circle,inner sep=0pt,minimum size=5pt,fill=black,draw=black}, w/.style={circle,inner sep=0pt,minimum size=5pt, draw=black, fill=white}, g/.style={circle,inner sep=0pt,minimum size=5pt, draw=black, fill=gray}]
\node [b] (2) {};
\node [b, below of=2] (1) {};
\node [b, below of=1] (0) {};
\draw (2)--(1)--(0);
\begin{pgfonlayer}{background}
\foreach \x in {0,1} {
\node[ellipse callout, draw, inner sep=1pt, fill=gray!10!white, callout absolute pointer={(\x)}, right of=\x, callout pointer arc=130] {$\rho^\x$};
\node[ellipse callout, draw, inner sep=2pt, fill=gray!10!white, callout absolute pointer={(\x)}, left of=\x, callout pointer arc=110] {$\lambda^\x$};}
\node[ellipse callout, draw, inner sep=1pt, fill=gray!10!white, callout absolute pointer={(2)}, right of=2, callout pointer arc=130] {$\rho^2$};
\draw node [circle, fill=red, fill opacity=.5, minimum size=2pt] at (2) {}; 
\end{pgfonlayer}
\end{tikzpicture}
\caption{}
\end{subfigure}
\caption{Figure (a) represents the critical geometric Galton-Watson tree conditioned to survive; trees $\rho^1,\ldots, \rho^n, \ldots$ and $\lambda^1,\ldots,\lambda^n,\ldots$ are independent geometric Galton-Watson trees. Figure (b) represents $cut_2(T_\infty)$.}\label{Tinfty}
\end{figure}
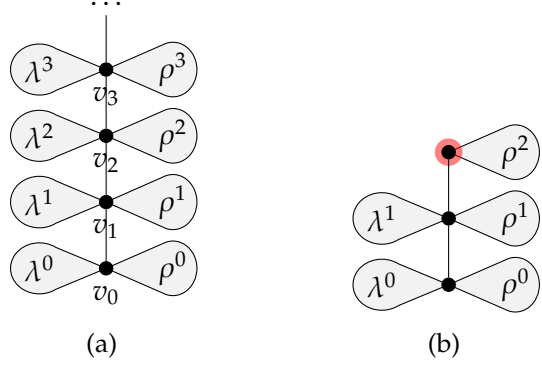

We write $cut_n(T_\infty)$ for the pair $(\tau,v_n)$, where $\tau$ is the (finite plane rooted) tree obtained from $T_\infty$ by erasing every descendant of $v_n$ which lies \emph{strictly to the left} of $v_{n+1}$, together with $v_{n+1}$ itself and all of its descendants: that is, $\tau$ consists of the path $v_0\ldots v_n$, with trees $\rho^0,\ldots ,\rho^n$ attached to the right and trees $\lambda^0,\ldots,\lambda^{n-1}$ attached to the left. Then we have the following standard result.

\begin{lem}\label{go infinite}Let $F$ be a non-negative real valued function defined on pairs $(\tau,u)$, where $\tau$ is a finite rooted plane tree and $u$ is a vertex in $\tau$; let $\theta$ be a Galton-Watson tree with critical geometric offspring distribution. Then for all $n\geq 0$
$$\GW\left[F\left(cut_n(T_{\infty})\right)\right]=\GW\left[\sum_{\substack{u\in \theta\\|u|=n}}F(\theta,u)\right].$$
\end{lem}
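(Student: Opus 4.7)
My plan is to reduce the identity to a vertex-by-vertex matching of weights. Since $F$ is arbitrary and non-negative, it suffices to show that for every finite rooted plane tree $\tau$ and every vertex $v$ of $\tau$ with $|v|=n$,
$$\GW\bigl(cut_n(T_\infty)=(\tau,v)\bigr)=\GW(\theta=\tau),$$
where $\theta$ is a critical geometric Galton--Watson tree, so that $\GW(\theta=\tau)=\prod_{u\in\tau}\mu(c_u)$ with $c_u$ the number of children of $u$ in $\tau$ and $\mu(k)=2^{-k-1}$. Once this identity is in hand, summing against $F(\tau,v)$ over pairs with $|v|=n$ gives both sides of the lemma, since the right-hand side equals $\sum_\tau\GW(\theta=\tau)\sum_{u\in\tau,\,|u|=n}F(\tau,u)$.

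For the left-hand side I would use the spinal decomposition intrinsic to the construction of $T_\infty$. Along the ancestral path $w_0=\root,w_1,\ldots,w_n=v$ in $(\tau,v)$, let $\lambda^i_\tau$ be the plane tree consisting of $w_i$ together with the subtrees of $\tau$ hanging from its children strictly to the left of $w_{i+1}$ (for $i<n$), and define $\rho^i_\tau$ analogously on the right (for $i<n$); set also $\rho^n_\tau$ to be the subtree of $\tau$ rooted at $v$, and let $l_i,r_i$ denote the corresponding root degrees. The event $\{cut_n(T_\infty)=(\tau,v)\}$ then coincides with the intersection of the events $\{\lambda^i=\lambda^i_\tau\}$ and $\{\rho^i=\rho^i_\tau\}$ for $i<n$ together with $\{\rho^n=\rho^n_\tau\}$, no constraint being placed on $\lambda^n$ since it is erased by $cut_n$. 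By independence of the attached trees in the construction,
$$\GW\bigl(cut_n(T_\infty)=(\tau,v)\bigr)=\prod_{i=0}^{n-1}\GW(\theta=\lambda^i_\tau)\,\GW(\theta=\rho^i_\tau)\cdot\GW(\theta=\rho^n_\tau).$$

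Meanwhile, factoring $\GW(\theta=\tau)=\prod_{u\in\tau}\mu(c_u)$ along the spine produces a factor $\mu(l_i+r_i+1)$ at each $w_i$ with $i<n$ (since $w_i$ has $l_i$ left siblings of $w_{i+1}$, $w_{i+1}$ itself, and $r_i$ right siblings as children), a factor $\mu(r_n)$ at $v$, and contributions from the non-spine vertices that are precisely the interiors of $\GW(\theta=\lambda^i_\tau)$ and $\GW(\theta=\rho^i_\tau)$, i.e.\ those product expressions minus their root factors $\mu(l_i)$ and $\mu(r_i)$. Matching the two expressions then reduces everything to the pointwise identity
$$\mu(l)\mu(r)=\mu(l+r+1)\qquad\text{for all }l,r\ge 0,$$
which, for $\mu(k)=2^{-k-1}$, holds because both sides equal $2^{-l-r-2}$. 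This is the only nontrivial ingredient: it is the special feature of the geometric offspring distribution that lets the unbiased construction of $T_\infty$ reproduce Kesten's size-biased tree, and once it is recorded no real obstacle remains --- the rest is bookkeeping.
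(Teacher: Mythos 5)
Your proof is correct and follows essentially the same route as the paper: reduce to the pointwise identity $\GW(cut_n(T_\infty)=(\tau,v))=\GW(\theta=\tau)$, factor the left side over the independent left/right subtrees $\lambda^i,\rho^i$ hanging off the spine, and match factors vertex-by-vertex, with the only non-trivial step being that each spine vertex contributes $\mu(l_i)\mu(r_i)$ on one side and $\mu(l_i+r_i+1)$ on the other. Your explicit isolation of the identity $\mu(l)\mu(r)=\mu(l+r+1)$ as the special feature of the geometric law is a nice way to phrase the same cancellation the paper carries out in coordinates, but there is no substantive difference between the two arguments.
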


\begin{proof}
For the left hand side we have
$$\GW\left[F\left(cut_n(T_{\infty})\right)\right]=\sum_{(\tau,u)}F(\tau, u)\GW(cut_n(T_\infty)=(\tau,u))=$$
$$\displaystyle{=\sum_{\tau}\sum_{\substack{u\in \tau\\|u|=n}}F(\tau,u)\GW(cut_n(T_\infty)=(\tau,u))}$$
where $(\tau, u)$ ranges among all pairs formed by a finite (rooted plane) tree and a vertex $u$ of height $n$ in the tree.

Thus it is enough to show that, for all such pairs $(\tau,u)$,
$$\GW(cut_n(T_\infty)=(\tau,u))=\GW(\theta=\tau).$$

But the probability $\GW(cut_n(T_\infty)=(\tau,u))$ is easy to compute: consider the set formed by $u$ and its ancestors in $\tau$; order them according to height, and label them $u_0,\ldots, u_n$, so that $|u_i|=i$ ($u_0$ is the root and $u_n$ is $u$). For $i=0,\ldots,n-1$, let $\tau_l^i$ be the subtree of $\tau$ formed by $u_i$ and its descendants lying strictly to the left of $u_{i+1}$. Similarly, let $\tau_r^i$ be the subtree of $\tau$ formed by $u_i$ and its descendants lying strictly to the right of $u_{i+1}$. Then

$$\GW(cut_n(T_\infty)=(\tau,u))=\left(\prod_{i=0}^{n-1}\GW(\lambda^i=\tau_l^i)\GW(\rho^i=\tau_r^i)\right) \cdot \GW(\rho^n=\tau^{u})=$$
$$=\left(\prod_{i=0}^{n-1}\GW(\theta=\tau_l^i)\GW(\theta=\tau_r^i)\right)\cdot \GW(\theta=\tau^{u}).$$

Now, for each vertex $x$ in $\tau$, let $c(x)$ be the number of children of $x$ in $\tau$; clearly,
$$\GW(\theta=\tau)=\prod_{x\in\tau}2^{-c(x)-1}$$
by definition of $\theta$.

On the other hand, for each vertex in tree $\tau_l^i$ ($i$ between 1 and $n-1$) call $c_l(x)$ the number of its children in $\tau_l^i$; similarly, call $c_r(x)$ the number of children of $x$ in $\tau_r^i$, if $x$ belongs to such a tree. Then
$$\GW(cut_n(T_\infty)=(\tau,u))=\prod_{i=0}^{n-1}\left\{\prod_{x\in\tau_l^i}2^{-c_l(x)-1}\prod_{x\in\tau_r^i}2^{-c_r(x)-1}\right\}\cdot \prod_{x\in\tau^u}2^{-c(x)-1}.$$

Consider any vertex $x$ of $\tau$ such that $x\not\in \{u_0, \ldots u_{n-1}\}$; then $x$ appears only once in the expression above, as part of some tree $\tau_l^j$ or $\tau_r^j$, or possibly of $\tau^u$. Furthermore, the number of children of $x$ in its subtree ($c_l(x)$ or $c_r(x)$ or $c(x)$) is exactly the same as the number of children $c(x)$ that $x$ has in $\tau$. Now, for $i=0,\ldots,n-1$, consider $u_i$; it appears inside two of the products, as part of tree $\tau_l^i$ and tree $\tau_r^i$, therefore it contributes to $\GW(cut_n(T_\infty)=(\tau,u))$ with a factor $2^{-c_l(u_i)-1}2^{-c_r(u_i)-1}$; on the other hand, $u_i$ has $c(u_i)=c_l(u_i)+c_r(u_i)+1$ children in $\tau$, so that its contribution to $\GW(\theta=\tau)$ is a factor $2^{-c(u_i)-1}=2^{-c_l(u_i)-c_r(u_i)-2}$.

Consequently, we have $\GW(cut_n(T_\infty)=(\tau,u))=\GW(\theta=\tau)$, as wanted.
\end{proof}

\bigskip

In order to adapt the notion of the critical geometric Galton-Watson tree conditioned to survive to our prior setting, we need to endow it with a random bicolouring. We do this by simply choosing the colour for each vertex of $T_\infty$ uniformly at random with probability $1/2$. In what follows, since the context will determine whether or not trees are bicoloured, we will still write $T_\infty$ for the object just introduced, namely the \emph{critical geometric Galton-Watson tree conditioned to survive, uniformly bicoloured}.

Similarly, if $\tau$ is a (random) finite tree, it can be uniformly bicoloured by choosing a colour for each of its vertices independently and uniformly at random, conditionally on $\tau$ itself. This applies in particular when $\tau$ is a critical geometric Galton-Watson tree. 

Finally, we remark that a bicoloured critical geometric Galton-Watson tree is not necessarily \emph{well} bicoloured; given a finite bicoloured tree $\tau$, we define the new tree $\tau^\circ$ as the one obtained by adding a white leaf as rightmost child of the root of $\tau$ and recolouring the root white, so that $\tau^\circ$ is well bicoloured; we will always think of $\tau$ as being embedded in $\tau^\circ$ in the obvious way. One may of course consider map-distances on $\tau^\circ$: given a vertex $u$ in $\tau$, we write $d^\circ(\tau,u)=d(\tau^\circ, u)$ for the map distance between vertex $u$ and the root in $\tau^\circ$. Analogously, we may consider $d^\circ(cut_n(T_\infty))$, which, if $cut_n(T_\infty)=(\tau,u)$, we take to mean $d(\tau^\circ,u)$.

\section{The algorithm running on the infinite bicoloured tree}\label{markov}

Let $T_\infty$ be the critical geometric Galton-Watson tree conditioned to survive, uniformly bicoloured; call $v_0,\ldots,v_n,\ldots$ the vertices on its spine ($v_0$ being the root). The distance $d^\circ(cut_n(T_\infty))$ is, for each positive integer $n$, a random variable which we wish to estimate (at least asymptotically in $n$).

We have described all through Section \ref{algorithm} an algorithm that can now be started on $(\tau^\circ,x,s)$, where $(\tau,x)=cut_n(T_\infty)$ and $s$ depends on the colour and offspring of $x$ in $\tau$ (therefore on $T_\infty$ and $n$); this will yield a sequence of $n$ states $s_0, \ldots , s_{n-1}$, each a random variable taking values in the space of states $\Sigma=\{w_0,w_{>0},b,j\}$, such that
$$d^\circ(cut_n(T_\infty))=n-|\{0\leq i <n|s_i=j\}|.$$

This sequence $(s_i)_{0\leq i < n}$ is `almost' a Markov Chain, in the sense clarified by the following fundamental proposition:

\begin{prop}\label{markovchain} Fix $n>0$; take the random infinite tree $T_\infty$ and consider the sequence $s_0,\ldots,s_{n-2}$ of the first $n-1$ inputs for the algorithm from Section \ref{algorithm}, started on $(\tau^\circ,x,s_0)$, where $(\tau,x)=cut_n(T_\infty)$. Then such a sequence has the same law as (the first $n-1$ steps of) a Markov chain $(X_i)_{i\geq 0}$ with transition matrix
$$
M=
\left(\begin{array}{c|cccc}
\Rsh & w_0 & w_{>0} & b & j\\
\hline
w_0 & 1/4 & 1/4 & 1/2 & 0 \\
w_{>0} & 1/16 & 5/16 & 5/8 & 0\\
b & 3/32 & 7/32 & 7/16 & 1/4\\
j & 1/8 & 1/8 & 1/4 & 1/2\\
\end{array}\right)$$
and a random initial state distributed as $(1/4, 1/4, 1/2, 0)$.
\end{prop}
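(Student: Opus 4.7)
The plan is to exploit the local nature of every step of the algorithm together with the independence built into the construction of $T_\infty$. Each transition $(\tau_i,x_i,s_i)\mapsto(\tau_{i+1},x_{i+1},s_{i+1})$ is determined by a bounded-size neighbourhood of $x_i$ in $\tau_i$: the colour of $x_i$, whether it has right siblings, and (in the black and jump cases) whether $p(x_i)$ has right siblings, together with the colour and degree of the resulting output vertex. By construction, $T_\infty$ consists of an infinite spine to which, at each level, independent uniform colours and independent copies $\rho^k$ of a critical geometric Galton--Watson tree (with offspring law $\mu(k)=2^{-k-1}$) are attached. First I would track, step by step, which pieces of this data are queried by the algorithm and argue inductively that after step $i$ the as-yet-unqueried portion of the tree still has the distribution of a fresh copy of (the relevant right half of) $T_\infty$ conditional on $s_i$; this is what makes the sequence $(s_i)$ Markov with the claimed transitions.

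The initial distribution is immediate: $v_n$ has an independent uniform colour, and $v_n$ is a leaf in $\tau_0$ exactly when $\rho^n$ reduces to its root, an event of probability $\mu(0)=1/2$, so $\GW(s_0=w_0)=\GW(s_0=w_{>0})=1/4$ and $\GW(s_0=b)=1/2$. Then I would compute each row of $M$ by enumerating the algorithm's sub-cases. For instance, starting from $s_i=b$ with $x_i$ at spine level $k$, let $c$ be the number of children of the root of $\rho^{k-1}$ (the right-siblings of $x_i$) and $c'$ that of the root of $\rho^{k-2}$ (the right-siblings of $p(x_i)$); these are two independent copies of $\mu$, so the four events $\{c\ge 2\}$, $\{c=1\}$, $\{c=0,\,c'\ge 1\}$, $\{c=0,\,c'=0\}$ each have probability $1/4$ and correspond respectively to sub-cases [b.1], [b.2], [b.3.1], [b.3.2]. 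In [b.1] the output vertex $p(x_i)$ is never a leaf and has an independent uniform colour, contributing $(0,1/2,1/2,0)$; in [b.2] the merged vertex is white with probability $1/4$ (both of the merged vertices must be white) and has a fresh $\mu$-distributed number of children, yielding $(1/8,1/8,3/4,0)$; in [b.3.1] the output is the leftmost child of the root of $\rho^{k-2}$, a fresh GW root with an independent uniform colour, giving $(1/4,1/4,1/2,0)$; and in [b.3.2] we deterministically enter the jump state, giving $(0,0,0,1)$. Weighting these by $1/4$ each produces the third row $(3/32,\,7/32,\,7/16,\,1/4)$ of $M$. The rows indexed by $w_0$ and $w_{>0}$ follow by the same kind of enumeration of [w$_0$], [w$_{>0}$.1] and [w$_{>0}$.2]; the row indexed by $j$ uses [j.1] (no right siblings, probability $\mu(0)=1/2$, stays in $j$) and [j.2] (right siblings, probability $1/2$, exits to a fresh GW root with independent uniform colour and fresh $\mu$-distributed degree).

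The main difficulty is making the fresh-data bookkeeping entirely rigorous, especially across sustained runs of the jump state. After [b.3.2] the algorithm has only learned that certain sibling counts vanish and that certain vertices are black; each subsequent [j.1] at working level $\ell$ queries only whether $c_{\ell-2}$ is zero, leaving the colour of $v_{\ell-1}$ and the shapes of the right-subtrees deeper down untouched, so the portion of $T_\infty$ below the current level retains its original product law. Likewise, when [j.2] finally exits the jump state, the landing vertex is the root of a fresh GW subtree carrying an independent uniform colour. Maintaining this invariant explicitly -- that, conditional on $(s_0,\dots,s_i)$, the unseen part of $T_\infty$ still has the law it would have unconditionally -- is the technical heart of the proof; once it is stated carefully the Markov property together with the numerical values in $M$ follows from the row-by-row case analysis sketched above.
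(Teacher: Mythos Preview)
Your proposal is correct and follows essentially the same route as the paper. The paper packages your ``fresh-data'' invariant as a separate lemma (Lemma~\ref{independence}), stating precisely that for each $i$ the right-subtrees $\rho_i^j$ attached along the current path are independent uniformly bicoloured Galton--Watson trees (with the expected degenerate clauses in state $j$), proves it by the same case-by-case induction you outline, and then reads off the entries of $M$ from a large transition table; your worked computation of the $b$ row matches that table exactly.
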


This proposition plays a key role in finally establishing Theorem \ref{main}; it is, in fact, the motivation that led to the algorithm as described in Section \ref{algorithm}, and its proof is nothing but a careful observation of how the various steps of the algorithm interact with the random element: in particular, some key independence properties are always preserved, mainly thanks to the Galton-Watson structure of subtrees and the nature of the geometric law.

\begin{wrapfigure}{r}{0.3\textwidth}
\vspace{-10pt}
\centering
\begin{tikzpicture}[node distance=25 pt, b/.style={circle,inner sep=0pt,minimum size=5pt,fill=black,draw=black}, w/.style={circle,inner sep=0pt,minimum size=5pt, draw=black, fill=white}, g/.style={circle,inner sep=0pt,minimum size=5pt, draw=black, fill=gray}]
\node [g, label=left :$p(x_i)$] (3) {};
\node [below of=3] (dots) {$\ldots$};
\node [g, above of=3, label=above :$x_i$] (i) {};
\node [g, below of=dots, label=left:$p^{(n-i-2)}(x_i)$] (2) {};
\node [g, below of=2, label=left:$p^{(n-i-1)}(x_i)$] (1) {};
\node [g, below of=1, label=left:$p^{(n-i)}(x_i)$] (0) {};
\draw (i)--(3)--(dots)--(2)--(1)--(0);
\begin{pgfonlayer}{background}
\foreach \x in {0,...,2} {
\node[ellipse callout, draw, inner sep=1pt, fill=gray!10!white, callout absolute pointer={(\x)}, right of=\x, callout pointer arc=130] {$\rho_i^\x$};}
\node[ellipse callout, draw, inner sep=1pt, fill=gray!10!white, callout absolute pointer={(i)}, right of=i, callout pointer arc=110] {$\rho_i^{n-i}$};
\node[ellipse callout, draw, inner sep=1pt, fill=gray!10!white, callout absolute pointer={(3)}, right of=3,xshift=5pt, callout pointer arc=110] {$\rho_i^{n-i-1}$};
\end{pgfonlayer}
\end{tikzpicture}
\caption{\label{right trees}Structure of $\tau_i$.}
\end{wrapfigure}
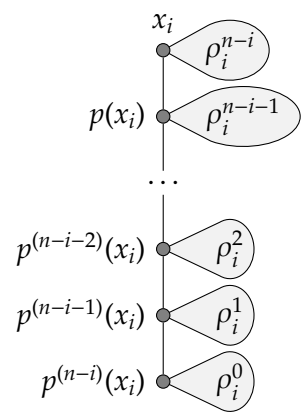

In order to prove Proposition \ref{markovchain}, we highlight those exact independence properties in a separate Lemma, for which some additional notation is needed. Suppose the algorithm is started on a triple $(\tau_0,x_0,s_0)$, where $x_0$ has height $n$ in $\tau_0$; we write $p^{j}$ for the $j$-th iteration of the parent function, so that $p^0(x_0)=x_0$ and $p^j(x_0)$ has height $n-j$ (in particular, $p^n(x_0)$ is the root of $\tau_0$). We call $\rho_0^j$ the subtree of $\tau_0$ consisting of ${p^{n-j}(x_0)}$ and its descendants lying \emph{strictly to the right} of ${p^{n-j-1}(x_0)}$ (so that the tree $\rho_0^j$ is rooted in a vertex of height $j$ in $\tau_0$); $\rho_0^n$ is simply $\tau_0^{x_0}$.

We repeat the same construction for all subsequent triples $(\tau_i,x_i,s_i)$: $x_i$ has height $n-i$; we write $\rho_i^j$ for the subtree of $\tau_i$ consisting of ${p^{n-i-j}(x_i)}$ and its descendants lying \emph{strictly to the right} of ${p^{n-i-j-1}(x_i)}$ (see Figure \ref{right trees}); as before, the root of $\rho_i^j$ has height $j$ in $\tau_i$, and $\rho_i^{n-i}=\tau_i^{x_i}$. 

We wish to prove the following:

\begin{lem}\label{independence}Fix $n>0$ and consider the (random) triples $(\tau_i,x_i,s_i)$ obtained from $T_\infty$ through the algorithm for $i$ between 0 and $n-2$. Then for each $i$,
\begin{itemize}
\item if $s_i=w_0$ or $s_i=b$, then $\rho_i^j$, for $j=1,\ldots ,n-i-1$, is a sequence of independent uniformly bicoloured Galton-Watson trees;
\item if $s_i=w_{>0}$, then $\rho_i^j$, for $j=1,\ldots ,n-i-1$, is a sequence of independent uniformly bicoloured Galton-Watson trees; also, $\tau_i^{r(x_i)}$ is a uniformly bicoloured Galton-Watson tree independent of the block of the $\rho_i^j$'s;
\item if $s_i=j$, then $\rho_i^j$, for $j=1,\ldots ,n-i-2$, is a sequence of independent uniformly bicoloured Galton-Watson trees, whilst $\rho_i^{n-i-1}=\{p(x_i)\}$ and $\rho_i^{n-i}=\{x_i\}$.
\end{itemize} 
\end{lem}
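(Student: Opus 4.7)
The plan is to proceed by induction on $i$, proving the stronger claim that, conditionally on the entire algorithm history $(\tau_j,x_j,s_j)_{j\leq i}$, the upper subtrees $\rho_i^j$ listed in the lemma form an i.i.d.\ sequence of uniformly bicoloured critical geometric Galton--Watson trees, with the auxiliary independence of $\tau_i^{r(x_i)}$ added in the state $w_{>0}$. Three ingredients will be combined: the Galton--Watson branching property (the subtrees attached to distinct children of a G--W root are i.i.d.\ G--W trees); the memorylessness of the geometric law $\mu(k)=2^{-k-1}$ (so that, conditional on the root having at least one child, removing the leftmost child together with its subtree leaves a G--W tree rooted at the same vertex); and the fact that the uniform bicolouring is preserved under restriction to subtrees as well as under the merging/recolouring rules of cases $[\mathrm{w}_{>0}.2]$ and $[\mathrm{b}.2]$, since those rules only affect the colour of the new root of $\tau_{i+1}^{x_{i+1}}$ and leave all vertices of the upper subtrees unchanged.

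For the base case $i=0$, the spine construction of $T_\infty$ provides the decomposition of the path $v_0,\ldots,v_n$ with i.i.d.\ critical geometric G--W trees $\rho^0,\ldots,\rho^n$ attached on the right (the left-attached $\lambda^i$'s being erased in the passage from $T_\infty$ to $\tau_0$). For $j\geq 1$ one checks $\rho_0^j=\rho^j$ --- the modification $\tau\mapsto\tau^\circ$ alters only the root $v_0$, hence only $\rho_0^0$ --- so that $\rho_0^1,\ldots,\rho_0^{n-1}$ are i.i.d.\ uniformly bicoloured G--W trees. The initial state $s_0$ is a function of the colour of $v_n$ and of $\tau_0^{x_0}=\rho^n$, both independent of $\rho^1,\ldots,\rho^{n-1}$, so conditioning on $s_0$ does not alter their joint law; the auxiliary clause for $s_0=w_{>0}$ then follows from the branching property applied inside $\rho^n$.

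For the inductive step, I case-split on $s_i$ and on the sub-case of the algorithm. In the local cases $[\mathrm{w}_0]$, $[\mathrm{w}_{>0}.1]$, $[\mathrm{w}_{>0}.2]$, $[\mathrm{b}.1]$, $[\mathrm{b}.2]$ and $[\mathrm{b}.3.1]$, the modification of $\tau_i$ is supported inside $\tau_i^{x_i}\cup\rho_i^{n-i-1}\cup\tau_i^{r(x_i)}$, so for $j<n-i-1$ one has $\rho_{i+1}^j=\rho_i^j$ (under the obvious reindexing) and the inductive hypothesis delivers the required i.i.d.\ structure at step $i+1$. The new subtree $\tau_{i+1}^{x_{i+1}}$ is built either from $\rho_i^{n-i-1}$ (for the parent-moves), from a rerooting of $\tau_i^{r(x_i)}$ with the prescribed root recolouring (cases $[\mathrm{w}_{>0}.2]$ and $[\mathrm{b}.2]$), or from the subtree of a suitable right sibling of $x_i$ (case $[\mathrm{b}.3.1]$); in every sub-case the inductive hypothesis guarantees that these data --- and hence also the next state $s_{i+1}$ --- are independent of the upper subtrees. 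The auxiliary claim for a successor state of type $w_{>0}$ follows from one further step of the branching property applied inside $\tau_{i+1}^{x_{i+1}}$.

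The main obstacle is the jump-state analysis $[\mathrm{b}.3.2]$, $[\mathrm{j}.1]$, $[\mathrm{j}.2]$, where the algorithm reaches two levels above $x_i$. In $[\mathrm{b}.3.2]$ and $[\mathrm{j}.1]$ the sub-case hypothesis directly forces the top one or two of the $\rho_i^j$ to be single-vertex trees, which matches the structure prescribed by the lemma for the resulting jump state. The delicate step is $[\mathrm{j}.2]$: one deletes the leftmost child of the root of $\rho_i^{n-i-2}$ together with its entire subtree, and the two pieces must be identified with $\rho_{i+1}^{n-i-2}$ and $\tau_{i+1}^{x_{i+1}}$ respectively. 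Conditional on the sub-case hypothesis that this root has at least one child, the memorylessness of $\mu$ ensures that the remaining offspring count is again $\mu$-distributed, so $\rho_{i+1}^{n-i-2}$ is still a uniformly bicoloured G--W tree; simultaneously, the deleted leftmost subtree --- now $\tau_{i+1}^{x_{i+1}}$ --- is by the branching property an independent u.b.\ G--W tree, which feeds precisely into the hypothesis of step $i+1$. Once this check has been performed for every transition, the i.i.d.\ property for the upper subtrees at step $i+1$ follows automatically from the inductive hypothesis at step $i$.
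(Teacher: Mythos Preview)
Your approach is essentially the same as the paper's: induction on $i$, case-split on the algorithm's sub-cases, and the two structural ingredients (Galton--Watson branching and geometric memorylessness). The base case and the overall architecture are fine.

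There is, however, a concrete misclassification. You list $[\mathrm{b}.3.1]$ among the ``local'' cases and assert that there ``for $j<n-i-1$ one has $\rho_{i+1}^j=\rho_i^j$''. This is false at $j=n-i-2$. In $[\mathrm{b}.3.1]$ the new pointed vertex is $x_{i+1}=t(x_i)$, a child of $p(p(x_i))$; consequently $\rho_{i+1}^{n-i-2}$ consists of the descendants of $p(p(x_i))$ lying strictly to the right of $t(x_i)$, whereas $\rho_i^{n-i-2}$ consists of those strictly to the right of $p(x_i)$. The two differ exactly by the subtree rooted at $t(x_i)$, which becomes $\tau_{i+1}^{x_{i+1}}$. (Relatedly, your phrase ``right sibling of $x_i$'' should read ``right sibling of $p(x_i)$''.) So $[\mathrm{b}.3.1]$ is structurally identical to $[\mathrm{j}.2]$: one removes the leftmost child of the root of $\rho_i^{n-i-2}$ together with its subtree, and the two pieces become $\rho_{i+1}^{n-i-2}$ and $\tau_{i+1}^{x_{i+1}}$. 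The fix is exactly the memorylessness argument you already gave for $[\mathrm{j}.2]$; the paper handles $[\mathrm{b}.3.1]$ in precisely this way. Once you regroup $[\mathrm{b}.3.1]$ with the jump-type transitions rather than the local ones, your proof goes through and matches the paper's.
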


\begin{proof}
We proceed by induction on $i$; for $i=0$, all assertions are trivial by definition of $T_\infty$ (more precisely, by the random structure of $cut_n(T_\infty)$): with the exception of $\rho^0_0$, whose root is recoloured as white and has a white leaf attached as a rightmost child, $\rho^j_0=\rho^j$: all right trees attached to the spine of $T_\infty$ are uniformly bicoloured Galton-Watson trees.

We loosely follow the original presentation of the algorithm and deal separately with each case.

Suppose $s_i=w_0$, and consider $(\tau_{i+1},x_{i+1},s_{i+1})$; we have $\rho_{i+1}^j=\rho_{i}^j$ for $j=1,\ldots,n-i-1$, since all the algorithm does is erase $x_i$ (that is, $\rho_{i}^{n-i}$), and the claim follows by the induction hypothesis.

If $s_i=w_{>0}$, then the output of the algorithm depends on whether $\rho_i^{n-i-1}$ consists of only $p(x_i)$ or not. If $s_{i+1}=b$ or $s_{i+1}=w_0$, then $\rho_{i+1}^j=\rho_{i}^j$ for $j=1,\ldots,n-i-2$, which is all that is required. The same is true for $s_{i+1}=w_{>0}$, but we also need to show that $\tau_{i+1}^{r(x_{i+1})}$ is Galton-Watson and independent of the $\rho_{i+1}^j$'s. The subtree $\rho_{i+1}^{n-i-1}$ is determined by the algorithm as follows. If $\rho_i^{n-i-1}=\{p(x_i)\}$ then it is isomorphic to $\tau_i^{r(x_i)}$ (we are interested in only the case of $r(x_i)$ and $p(x_i)$ being white), which has the required properties by the induction hypothesis. Otherwise we simply have $\rho_{i+1}^{n-i-1}=\rho_i^{n-i-1}$, and we are done.

If $s_i=b$, then we need to deal with a few cases separately. The most straightforward one is that of $s_{i+1}=j$; what the algorithm does in this case is merely erase $\tau_i^{x_i}$: conditions on trees $\rho_{i+1}^j$ are automatic (including the fact that $\rho_{i+1}^{n-i-2}=\{p(x_{i+1})\}$ and $\rho_{i+1}^{n-i-1}=\{x_{i+1}\}$, or we would not have switched to jump state). If $s_{i+1}=w_0$, this may be for one of two reasons:
\begin{itemize}
\item $x_i$ has only one right sibling in $\tau_i$, which is a white leaf, and $p(x_i)$ is white (case [b.2]); there is nothing to prove here, since $\rho_{i+1}^j=\rho_{i}^j$ for $j=1,\ldots,n-i-2$;
\item $x_i$ has no right sibling in $\tau_i$, but $p(x_i)$ does, and its next sibling is a white leaf (case [b.3.1]); in this case, while trees $\rho^{j}_{i}$ remain unchanged for $j\leq n-i-3$, $\rho_{i+1}^{n-i-2}$ is $\rho_{i}^{n-i-2}$ with its leftmost branch erased; on the other hand, what we have done is precisely condition such a tree on having a leftmost branch made up of a white leaf, and then remove it, which leaves nothing but a Galton-Watson tree.
\end{itemize}

The case of $s_{i+1}=b$ presents no added difficulties apart for the need for more casework. One needs to deal separately with cases [b.2], [b.3.1] (same as above, with a weaker condition to check) and [b.1], 	which is again trivial. Finally, one has to go through essentially the same for $s_{i+1}=w_{>0}$, but with the added requirement to show that $\tau_{i+1}^{r(x_{i+1})}$ is Galton-Watson and independent of the $\rho_{i+1}^j$'s. This is true in case [b.1] ($\tau_{i+1}^{r(x_{i+1})}=\tau_i^{r(p(x_i))}$), [b.2] ($\tau_{i+1}^{r(x_{i+1})}=\tau_i^{r(r(p(x_i)))}$, where $r(r(p(x_i)))$ does exist in $\tau_i$, or we would not go to state $w_{>0}$), [b.3.1] ($\tau_{i+1}^{r(x_{i+1})}=\tau_i^{r(r(p(p(x_i))))}$).

The very last possibility is for $s_i$ to be $j$; if $s_{i+1}$ is $j$ as well, there is hardly anything to prove; all other cases require arguments that are exactly the same as those used for $s_i=b$, and that we shall not repeat.

This concludes the proof by induction.
\end{proof}

Given Lemma \ref{independence}, Proposition \ref{markovchain} is only a matter of computing transition probabilities. We refer the reader to the summary table in the next page.

\afterpage{\include{tables1}}

\bigskip

The purpose of the algorithm was, since the very beginning, to give estimates for the map-distance of vertices from the root; we are now in a position to easily obtain asymptotics for the distance $d^\circ(cut_n(T_\infty))$. Namely, we have

\begin{prop}\label{markovchainestimate}Let $T_\infty$ be the geometric Galton-Watson tree conditioned to survive, uniformly bicoloured; then
$$\lim_{n\rightarrow\infty} \frac{d^\circ(cut_n(T_\infty))}{n}=7/9,$$
where the convergence is almost sure. Moreover, for all $\epsilon>0$ there exist positive $n_{\epsilon}, C$ such that, for all $n\geq n_{\epsilon}$,
$$\mP\left(\left|\frac{d^\circ(cut_n(T_\infty))}{n}-\frac{7}{9}\right|\geq\epsilon\right)\leq e^{-Cn}.$$
\end{prop}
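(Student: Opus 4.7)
The proof reduces immediately to an assertion about additive functionals of a Markov chain. By identity~$(\star)$ from Section~\ref{algorithm} one may rewrite $d^\circ(cut_n(T_\infty)) = n - N_n$ where $N_n = |\{0 \le i < n : s_i = j\}|$, and Proposition~\ref{markovchain} identifies the law of $(s_0,\ldots,s_{n-2})$ with that of the initial $n-1$ steps of a finite-state Markov chain $(X_i)_{i \ge 0}$ on $\Sigma = \{w_0, w_{>0}, b, j\}$ with transition matrix $M$ and initial distribution $(\tfrac14, \tfrac14, \tfrac12, 0)$. Inspection of $M$ reveals that the chain is irreducible (every state reaches $b$ and hence $j$; $j$ reaches every other state directly) and aperiodic (the self-loop at $j$ carries probability $\tfrac12$), so that standard Perron--Frobenius theory applies.

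The next step is to solve the balance equations $\pi M = \pi$ for the unique stationary distribution. A short $4 \times 4$ linear computation yields $\pi = (\tfrac19, \tfrac29, \tfrac49, \tfrac29)$, and in particular $\pi_j = 2/9$, which is consistent with the announced limit $1 - 2/9 = 7/9$. The ergodic theorem gives $N_n/n \to 2/9$ in probability (and in $L^1$), hence $d^\circ(cut_n(T_\infty))/n \to 7/9$ in probability.

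To obtain an exponential concentration rate, I would invoke the standard large-deviations principle for occupation measures of a finite-state ergodic Markov chain: e.g.\ apply G\"artner--Ellis to the tilted matrix $M_\theta(x,y) = M(x,y) e^{\theta \mathbf{1}_{\{y = j\}}}$, whose log-Perron eigenvalue $\Lambda(\theta)$ is smooth and strictly convex with minimum $0$ at $\theta = 0$. Its Legendre transform $I$ is a good rate function on $[0,1]$ vanishing only at $2/9$, so for every $\epsilon > 0$ there exist $C = C(\epsilon) > 0$ and $n_\epsilon \in \mathbb{N}$ with $\mP(|N_n/n - 2/9| \ge \epsilon) \le e^{-Cn}$ for $n \ge n_\epsilon$; plugged into $(\star)$ this is the advertised exponential bound. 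Finally, Borel--Cantelli applied to the summable series $\sum_n e^{-Cn}$, combined with a diagonal argument over rational $\epsilon \downarrow 0$, promotes the in-probability limit to almost-sure convergence.

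The mild subtlety -- and principal obstacle -- is that Proposition~\ref{markovchain} supplies a \emph{different} Markov chain for each $n$, rather than a single trajectory of one chain, so a pointwise law of large numbers is not directly available. This is precisely why the a.s.\ conclusion must be extracted from the quantitative large-deviations estimate via Borel--Cantelli. The only genuinely nontrivial step is therefore the invocation of the Markov large-deviations principle with the correct $\epsilon$-uniform rate; the rest (matrix arithmetic, verification of ergodicity, computation of $\pi$) is routine bookkeeping.
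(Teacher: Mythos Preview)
Your proposal is correct and follows essentially the same route as the paper: identify the state sequence with a finite irreducible aperiodic Markov chain via Proposition~\ref{markovchain}, compute the stationary distribution $\pi=\tfrac19(1,2,4,2)$ to obtain the constant $7/9$, and invoke the large-deviations principle for occupation measures of a finite-state chain (the paper simply cites \cite{LDP}, Chapter~3, where you sketch the G\"artner--Ellis mechanism). Your observation that Proposition~\ref{markovchain} furnishes a \emph{family} of chains indexed by $n$ rather than a single trajectory, and that almost-sure convergence should therefore be derived from the exponential bound via Borel--Cantelli, is a point the paper glosses over by invoking ``the law of large numbers'' directly; your treatment of this step is in fact more careful than the paper's.
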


\begin{proof}
We know that, for each $n>1$, the sequence of random states $s_0,\ldots,s_{n-2}$ (from the first $n-1$ triples that act as input for the algorithm when started on $cut_n(T_\infty)^\circ$) has the distribution described in Proposition \ref{markovchain}. Since we have $d^\circ(cut_n(T_\infty))=|\{0\leq i<n| s_i\neq j\}|$, we also have $|d^\circ(cut_n(T_\infty))-|\{0\leq i\leq n-2| s_i\neq j\}||\leq 1$.

On the other hand,
$$\lim_{n\rightarrow\infty}\frac{1}{n}|\{0\leq i\leq n-2| s_i\neq j\}|$$
is a constant by the law of large numbers and is easily computed via the limit distribution for a Markov chain with transition matrix $M$, which is $\pi=\frac{1}{9}\left(1, 2, 4, 2\right)$.

As a consequence, we have
$$\lim_{n\rightarrow\infty} \frac{1}{n}d^\circ(cut_n(T_\infty))=\lim_{n\rightarrow\infty}\frac{1}{n}|\{0\leq i\leq n-2| s_i\neq j\}|=7/9.$$

The second part of the proposition is a direct consequence of classical results of Large Deviation Theory about Markov chains with a finite state space, see for example Chapter~3 of \cite{LDP}. The statement is true if we substitute $S_n=|\{0\leq i\leq n-2|s_i\neq j\}|$ for $d^\circ(cut_n(T_\infty))$, because $s_0,\ldots, s_{n-2}$ is a (recurrent) Markov chain with finite state space.

The inequality $|d^\circ(cut_n(T_\infty))-S_n|\leq 1$ implies that $\GW(d^\circ(cut_n(T_\infty))=d)\leq \GW(|S_n-d|\leq 1)$ for any natural number $d$, and thus
$$\GW\left(\left|\frac{d^\circ(cut_n(T_\infty))}{n}-\frac{7}{9}\right|\geq\epsilon\right)\leq \GW\left(\left|\frac{S_n}{n}-\frac{7}{9}\right|\geq\epsilon-\frac{1}{n}\right)$$
which establishes the result for $d^\circ(cut_n(T_\infty))$.
\end{proof}

\section{Final proofs}\label{conclusion}

The final technical steps follow \cite{CHKdissections}, but we need to bypass the rerooting argument exploited there by establishing a more practical control on all map-distances (not only map-distances from the root, as we have done so far). We will start, however, with a proposition dealing only with map-distances from the root, in order to extend the statement as soon as all of the necessary lemmas are in place.

Before we start, let us introduce a little notation: suppose $(x_n)_{n\geq 1}$ is a sequence of real numbers; we write $x_n=\oexp(n)$ to mean that $x_n\leq C_1e^{-C_2n^a}$ for some positive $C_1, C_2, a$. Also, in the following proposition and proof, we will write $c$ for the constant $\frac{7}{9}$, which we obtained in Section \ref{markov}, and $d$ for the usual graph distance on the outerplanar map obtained from a tree via the bijection $\Psi$, with $d(u)$ being the graph distance of $u$ from the root vertex. 

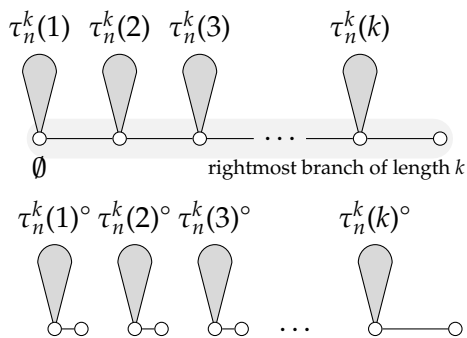
\begin{figure}
\centering
\begin{tikzpicture}[node distance=30 pt, b/.style={circle,inner sep=0pt,minimum size=5pt,fill=black,draw=black}, w/.style={circle,inner sep=0pt,minimum size=5pt, draw=black, fill=white}, g/.style={circle,inner sep=0pt,minimum size=5pt, draw=black, fill=gray}]
\node [w] (4) {};
\node [left of=4] (dots) {$\ldots$};
\node [w, right of=4] (i) {};
\node [w, left of=dots] (3) {};
\node [w, left of=3] (2) {};
\node [w, left of=2, label=below:$\root$] (1) {};
\draw (i)--(4)--(dots)--(3)--(2)--(1);
\begin{pgfonlayer}{background}\draw[gray!10, line width=15pt, line cap=round] (i)--(1) node[near start,below, color=black, scale=.7] {rightmost branch of length $k$};\end{pgfonlayer}
\begin{pgfonlayer}{background}
\foreach \x in {1,...,3} {
\node[ellipse callout, draw, inner sep=1pt, fill=gray!30!white, callout absolute pointer={(\x)}, above of=\x, yshift=-5pt, label=above:$\tau_n^k(\x)$,callout pointer arc=180] {\phantom{$T$}};}
\node[ellipse callout, draw, inner sep=1pt, fill=gray!30!white, callout absolute pointer={(4)}, above of=4, yshift=-5pt,label=above:$\tau_n^k(k)$, callout pointer arc=180] {\phantom{$T$}};
\end{pgfonlayer}
\end{tikzpicture}\\
\begin{tikzpicture}[node distance=30 pt, b/.style={circle,inner sep=0pt,minimum size=5pt,fill=black,draw=black}, w/.style={circle,inner sep=0pt,minimum size=5pt, draw=black, fill=white}, g/.style={circle,inner sep=0pt,minimum size=5pt, draw=black, fill=gray}]
\node [w] (4) {};
\node [left of=4] (dots) {$\ldots$};
\node [w, right of=4] (i) {};
\node [w, left of=dots] (3) {};
\node [w, left of=dots, xshift=10pt] (3a) {};
\node [w, left of=3] (2) {};
\node [w, left of=3,xshift=10pt] (2a) {};
\node [w, left of=2, xshift=10pt] (1a) {};
\node [w, left of=2] (1) {};
\draw (i)--(4) (3)--(3a) (2)--(2a) (1)--(1a);
\begin{pgfonlayer}{background}
\foreach \x in {1,...,3} {
\node[ellipse callout, draw, inner sep=1pt, fill=gray!30!white, callout absolute pointer={(\x)}, above of=\x, yshift=-5pt, label=above:$\tau_n^k(\x)^\circ$,callout pointer arc=180] {\phantom{$T$}};}
\node[ellipse callout, draw, inner sep=1pt, fill=gray!30!white, callout absolute pointer={(4)}, above of=4, yshift=-5pt,label=above:$\tau_n^k(k)^\circ$, callout pointer arc=180] {\phantom{$T$}};
\end{pgfonlayer}
\end{tikzpicture}
\caption{\label{rightmost branch} The tree $\tau_n^k$ seen as a forest of $k$ linked rooted plane trees.}
\end{figure}

\begin{prop}\label{final prop} Let $\tau_n$ be a random well bicoloured tree with $n$ vertices, and let $\di(\tau_n)$ be its (random) diameter; then for all $\epsilon>0$
$$p_n=\GW\left(\exists u \in \tau_n \mbox{ s.t. } |d(\tau_n,u)-c|u||\geq\epsilon\max\{\di(\tau_n), \sqrt{n}\}\right)=\oexp(n).$$
\end{prop}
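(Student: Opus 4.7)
The strategy is a union bound over heights combined with the large deviation estimate of Proposition \ref{markovchainestimate}. First, observe that the bad event at a vertex $u$ of height $h$ forces $h \geq \epsilon \sqrt{n}$: since $d(\tau_n, u) \in [0, h]$ and $c = 7/9 \in (0,1)$, we have $|d(\tau_n, u) - ch| \leq h$, so a deviation of size at least $\epsilon \sqrt{n}$ forces $h \geq \epsilon \sqrt{n}$. Moreover, the diameter satisfies $\Delta(\tau_n) \geq h$ for any vertex of height $h$, so $\max\{\Delta(\tau_n), \sqrt{n}\} \geq h$ and the bad event implies the relative deviation $|d(\tau_n, u)/h - c| \geq \epsilon$. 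A union bound grouped by height therefore reduces the proposition to showing
\begin{equation*}
\sum_{h = \lceil \epsilon \sqrt{n} \rceil}^{n} \mathbb{P}\bigl(\exists u \in \tau_n : |u| = h,\ |d(\tau_n, u)/h - c| \geq \epsilon\bigr) = \oexp(n).
\end{equation*}

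For each $h$, I would estimate the inner probability by a first-moment computation. Transferring from the uniform well-bicoloured tree $\tau_n$ to the Galton-Watson framework---via the decomposition of $\tau_n$ as a forest along its rightmost branch depicted in Figure \ref{rightmost branch}, combined with the $\tau \mapsto \tau^\circ$ correspondence from Section \ref{infinite tree}---identifies the relevant map-distances with values $d^\circ(\cdot,\cdot)$ on components which, conditionally on their sizes, are uniformly bicoloured critical geometric Galton-Watson trees. Lemma \ref{go infinite} then rewrites the expected number of bad vertices at height $h$ in a GW tree $\theta$ as
\begin{equation*}
\mathbb{P}\bigl(|d^\circ(cut_h(T_\infty))/h - c| \geq \epsilon'\bigr) \leq e^{-C(\epsilon)\, h},
\end{equation*}
where the last bound is exactly the large deviation estimate of Proposition \ref{markovchainestimate}. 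Using the standard asymptotic $\mathbb{P}(|\theta| = m) \geq c_1 m^{-3/2}$ for the critical geometric GW tree to convert this unconditioned expectation into a conditional probability, at the price of a polynomial factor, and summing over $h \geq \lceil \epsilon \sqrt{n}\rceil$, yields $p_n \leq C\, n^{3/2} e^{-C(\epsilon)\,\epsilon \sqrt{n}} = \oexp(n)$.

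The main technical obstacle is the bookkeeping just glossed over: $\tau_n$ is \emph{not} a uniformly bicoloured GW tree conditioned on size---the rightmost branch is forced white---so one must verify that, for a vertex $u$ lying in some component of the rightmost-branch decomposition, the map-distance from $u$ to the root of $\tau_n$ equals the corresponding $d^\circ$-distance within that component plus the contribution from walking along the spine, up to errors to which the same large deviation bound still applies. Once this accounting is carried out, the many-to-one identity and the exponential estimate of Proposition \ref{markovchainestimate} combine to give the desired $\oexp(n)$ bound uniformly in $n$.
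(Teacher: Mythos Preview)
Your proposal is correct and follows essentially the same route as the paper: decompose $\tau_n$ along its (short, by Lemma~\ref{ancestor path}) rightmost branch into a forest of components which, conditionally on their sizes, are uniformly bicoloured critical GW trees; uncondition via the $m^{-3/2}$ lower bound on $\mathbb{P}(|\theta|=m)$; apply the many-to-one identity of Lemma~\ref{go infinite}; and conclude with the large deviation bound of Proposition~\ref{markovchainestimate}. The one cosmetic difference is that you dispose of small heights by the clean observation $|d(\tau_n,u)-c|u||\leq |u|$ (forcing $|u|\geq\epsilon\sqrt{n}$), whereas the paper splits the height sum at $n^{1/4}$; both serve the same purpose of making $\sum_h e^{-Ch}$ decay stretched-exponentially in $n$, and the bookkeeping you flag as the ``main technical obstacle'' is exactly what the paper spells out (the map-distance and height within a component differ from those in $\tau_n$ by at most the rightmost-branch length $k\leq\tfrac{\epsilon}{4}\sqrt{n}$).
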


\begin{proof} For any positive integer $k$, we call $\tau_n^k$ a random well bicoloured tree with $n$ vertices conditioned on having a rightmost branch of length exactly $k$. We claim that the probability of $\tau_n$ having a rightmost branch of length greater than $\epsilon\sqrt{n}/4$ is $\oexp(n)$; we shall need a more general result before the end of this section, and we postpone the proof of this claim, in a stronger form, until the end of the proof (see Lemma \ref{ancestor path}). Given the claim, we have that $p_n$ is $\oexp(n)$ if and only if the same is true for

$$\sum_{k=1}^{\frac{\epsilon}{4}\sqrt{n}} q_{n,k}\GW \left(\exists u \in \tau^k_n \mbox{ s.t. } |d(\tau^k_n,u)-c|u||\geq\epsilon\max\{\di(\tau^k_n), \sqrt{n}\}\right)$$
where $q_{n,k}$ is the probability that $\tau_n$ has a rightmost branch of length exactly $k$. If we were able to prove that each probability in the sum is actually $\oexp(n)$, then we would have $p_n=\oexp(n)$ as well.

Let us consider a single term of the sum. Notice that, for each $k$, $\tau_n^k$ can be seen as a random forest of $k$ ordered trees, with $n-1$ vertices between them, such that the trees are linked by the roots with a path going from left to right, and a single extra vertex is linked to the root of the rightmost tree; each of the trees is bicoloured (not necessarily \emph{well} bicoloured) with the only condition of having a white root; the rightmost vertex is white. 


We label the $k$ (random) trees in the forest $\tau_n^k$, ordered from left to right, $\tau_n^k(1),\ldots,\tau_n^k(k)$; if, for each $i$ between $1$ and $k$, we condition $\tau_n^k(i)$ on having a certain number $n_i$ of vertices (with $n_i\geq 1$ and $n_1+\ldots+n_k=n-1$), then $\tau_n^k(i)$ simply becomes a random plane tree with $n_i$ vertices (uniformly bicoloured but with a white root), which we call $\theta_{n_i}$.

Notice now that, for any vertex $u$ in $\tau_n^k(i)$, $|d(\tau_n^k,u)-d^\circ(\tau_n^k(i),u)|\leq k$; also, the height of $u$ in $\tau_n^k(i)$ differs from the height of $u$ in $\tau_n^k$ by at most $k$, and we have the obvious inequality $\di(\tau_n^k(i))\leq \di(\tau_n^k)$ between diameters. As a consequence,
$$\GW \left(\exists u \in \tau^k_n \mbox{ s.t. } |d(\tau^k_n,u)-c|u||\geq\epsilon\max\{\di(\tau^k_n), \sqrt{n}\}\right)\leq$$
$$\leq \sum_{i=1}^k \GW \left(\exists u \in \tau^k_n(i) \mbox{ s.t. } |d^\circ(\tau^k_n(i),u)-c|u||\geq\epsilon\max\{\di(\tau^k_n(i)), \sqrt{n}\}-2k\right)$$
where in the second expression we still write $|u|$ for the height of the vertex in $\tau_n^k(i)$. 

Hence the probability above is no more than
$$\max_{\substack{n_1,\ldots,n_k>0\\n_1+\ldots+n_k=n-1}}\sum_{i=1}^{k}\GW\left(\exists u \in \theta_{n_i} \mbox{ s.t. } |d^\circ(\theta_{n_i},u)-c|u||\geq\epsilon\max\{\di(\theta_{n_i}), \sqrt{n}\}-2k\right).$$

Now, we know that $k\leq \frac{\epsilon}{4}\sqrt{n}$, and therefore we can reduce to evaluating the expression
$$\frac{\epsilon}{4}\sqrt{n}\max_{0<m<n}\GW\left(\exists u \in \theta_{m} \mbox{ s.t. } |d^\circ(\theta_{m},u)-c|u||\geq\frac{\epsilon}{2}\max\{\di(\theta_{m}), \sqrt{n}\}\right).$$

Let $\theta$ be a uniformly bicoloured Galton-Watson tree (see Section \ref{infinite tree}); the probability that $\theta$ has $m$ vertices is $2^{-2m-1}Cat(m-1)$, which is asymptotic to $m^{-\frac{3}{2}}$; we can therefore find a constant $C_1$ such that, for any $m$, $2^{-2m-1}Cat(m-1)\geq C_1m^{-\frac{3}{2}}$. This guarantees that, for any positive integer $m$ less than $n$,
$$\GW\left(\exists u \in \theta_{m} \mbox{ s.t. } |d^\circ(\theta_{m},u)-c|u||\geq\frac{\epsilon}{2}\max\{\di(\theta_{m}), \sqrt{n}\}\right) \leq$$ 
$$\leq \frac{1}{C_1}m^{\frac{3}{2}}\cdot\GW\left(\exists u \in \theta \mbox{ s.t. } |d^\circ(\theta,u)-c|u||\geq\frac{\epsilon}{2}\max\{\di(\theta), \sqrt{n}\}\right).$$

If we write $q_n$ for the probability appearing in the above expression, then (since $m\leq n$) proving $q_n=\oexp(n)$ would guarantee that the above -- and therefore $p_n$ -- is $\oexp(n)$. We shall now turn to the former endeavour.

It is clear that
$$q_n\leq\GW\left[\sum_{u\in \theta}\1\left(|d^\circ(\theta,u)-c|u||\geq\frac{\epsilon}{2}\max\{\di(\theta), \sqrt{n}\}\right)\right]$$
and, grouping vertices together according to height, the latter can be rewritten as
$$\GW\left[\mbox{\fontsize{10}{30}\selectfont $\displaystyle \sum_{j\geq 1}\sum_{u\in \theta\atop|u|=j}\1\left(|d^\circ(\theta,u)-cj|\geq\frac{\epsilon}{2}\max\{\di(\theta), \sqrt{n}\}\right)$}\right]\leq\sum_{j\geq 1}\GW\left[\mbox{\fontsize{10}{30}\selectfont $\displaystyle\sum_{u\in \theta\atop|u|=j}\1\left(|d^\circ(\theta,u)-cj|\geq\frac{\epsilon}{2}\max\{j, \sqrt{n}\}\right)$}\right]$$
where we have used the fact that, for each $u$ in the sum, we have $d(\theta)\geq|u|$.

We can now use Lemma \ref{go infinite} to turn the expression above into
$$\sum_{j\geq 1}\GW\left(|d^\circ(cut_j(T_\infty))-cj|\geq\frac{\epsilon}{2}\max\{j, \sqrt{n}\}\right)$$
where $T_\infty$ is the critical geometric Galton-Watson tree conditioned to survive, randomly bicoloured, as presented in Section \ref{infinite tree}.

We split the sum into two parts, which we will deal with separately: the sum for $j\leq n^{1/4}$ and that of the terms with $j>n^{1/4}$.

Suppose that $j\leq n^{1/4}$; clearly, $d^\circ(cut_j(T_\infty))\leq j$, and $\max\{j, \sqrt{n}\}=\sqrt{n}$. This gives $|d^\circ(cut_j(T_\infty))-cj|\leq d^\circ(cut_j(T_\infty))+cj\leq (1+c)j\leq(1+c)n^{1/4}$. Thus it is enough to choose $n$ so that $(1+c)n^{1/4}<\frac{\epsilon}{2}\sqrt{n}$, that is $n^{1/4}>\frac{2+2c}{\epsilon}$, to obtain that
$$\sum_{j=1}^{\lfloor n^{1/4}\rfloor}\GW\left(|d^\circ(cut_j(T_\infty))-cj|\geq\frac{\epsilon}{2}\max\{j, \sqrt{n}\}\right)=0.$$

As for the sum with $j> n^{1/4}$, we have
$$\GW\left(\left|{d^\circ(cut_j(T_\infty))}-cj\right|\geq\frac{\epsilon}{2}\max\{j, \sqrt{n}\}\right)\leq\mathbb{P}\left(\left|\frac{d^\circ(cut_j(T_\infty))}{j}-c\right|\geq\frac{\epsilon}{2}\right)$$
which, by choosing $n$ appropriately according to Proposition \ref{markovchainestimate}, can be bounded by $e^{-Cn}$.

This gives, for $n$ suitably big, the bound $$\sum_{j>n^{1/4}}e^{-Cj}=\oexp(n),$$
 which is our aim.

\end{proof}

We now prove the claim from the beginning of the proof, in the form of the following Lemma:

\begin{lem}\label{ancestor path}Let $\tau_n$ be a random well bicoloured plane tree with $n$ vertices, and let $\alpha$ and $\epsilon$ be positive real numbers; we call a path in $\tau_n$ an \emph{ancestor path} if it is of the form $x_0\ldots x_l$, with $x_{i}=p(x_{i-1})$ for all $i$ between 1 and $l$. Then
$$\GW\left(\mbox{$\tau_n$ has an ancestor path of length at least $\alpha n^\epsilon$ entirely made up of white vertices}\right) = \oexp(n);$$
in particular, the probability that $\tau_n$ has a rightmost branch of length at least $\alpha n^\epsilon$ is $\oexp(n)$.
\end{lem}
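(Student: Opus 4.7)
My plan is to compare the uniform measure $\GW_{wb}$ on well bicoloured trees with $n$ vertices (that of $\tau_n$ in the statement) to the simpler uniform measure $\GW_{bc}$ on \emph{bicoloured} plane trees with $n$ vertices, under which the underlying plane tree and its bicolouring are independent: the tree is uniform on plane trees of size $n$ and each vertex is coloured white or black uniformly and independently. This independence makes the union-bound estimate on white-path events elementary, and the whole task reduces to controlling the comparison constant.

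The comparison is direct. A well bicoloured tree is the same thing as a bicoloured tree whose rightmost branch happens to be all white, so
\[
\frac{\#\{\text{bicoloured trees of size $n$}\}}{\#\{\text{well bicoloured trees of size $n$}\}} \;=\; \frac{1}{\GW_{\mathrm{unif}}\bigl[2^{-r(\theta_n)}\bigr]},
\]
where $\theta_n$ denotes a uniform plane tree of size $n$ and $r(\theta_n)$ is the length of its rightmost branch. The denominator is bounded below by a positive constant uniformly in $n$: plane trees on $n$ vertices with $r(\theta)=2$ are exactly those whose root has a leaf as rightmost child, and removing that leaf and the root puts them in bijection with the $C_{n-2}$ ordered forests of plane trees of total size $n-2$, so
\[
\GW_{\mathrm{unif}}\bigl(r(\theta_n)=2\bigr)\;=\;\frac{C_{n-2}}{C_{n-1}}\;=\;\frac{n}{2(2n-3)}\;\xrightarrow[n\to\infty]{}\;\frac{1}{4}.
\]
Hence $\GW_{\mathrm{unif}}[2^{-r(\theta_n)}]\ge 2^{-2}\cdot C_{n-2}/C_{n-1}\ge c_0>0$ for all $n$ sufficiently large, and for every event $E$ measurable with respect to the coloured tree one has $\GW_{wb}(E)\le C\,\GW_{bc}(E)$ with $C$ independent of $n$.

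Setting $L:=\lceil\alpha n^\epsilon\rceil$, the bound under $\GW_{bc}$ is then a one-line union bound. Any ancestor path of length $L$ in a tree is determined by its lowest vertex, which must have height at least $L$, so any tree on $n$ vertices contains at most $n$ such paths. Conditionally on the tree, the $L+1$ vertices of any fixed ancestor path are independently white with probability $1/2$ each under $\GW_{bc}$, so that path is entirely white with probability $2^{-(L+1)}$. A union bound gives
\[
\GW_{bc}\bigl(\exists\text{ white ancestor path of length }\ge L\bigr)\;\le\;n\cdot 2^{-(L+1)},
\]
which is $\oexp(n)$ (with exponent $a=\epsilon$); combining with the comparison yields the same $\oexp(n)$ bound under $\GW_{wb}$, which is the main statement. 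The ``in particular'' clause is then immediate, since in a well bicoloured tree the rightmost branch is by definition a white ancestor path, and a rightmost branch of length $\ge\alpha n^\epsilon$ is automatically an instance of the event just controlled.

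The only non-routine step is the lower bound on $\GW_{\mathrm{unif}}[2^{-r(\theta_n)}]$, namely the observation that the rightmost branch of a uniform plane tree has a positive limiting probability of being of any given small length as $n\to\infty$; once this is in hand, everything else is a direct union bound exploiting the independence of the tree and its colouring under $\GW_{bc}$.
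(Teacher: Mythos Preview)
Your proof is correct and takes a genuinely different route from the paper's. The paper argues in two stages: it first establishes the ``in particular'' clause directly, by counting well bicoloured trees with a rightmost branch of length $d$ (there are $2^{n-1-d}\,Cat(n-1-d,d)$ of them) and summing over $d>\alpha n^\epsilon$; it then deduces the general statement by conditioning on the rightmost branch being short and observing that, off the rightmost branch, colours are genuinely independent under $\GW_{wb}$, so any long ancestor path has at least $\tfrac{\alpha}{2}n^\epsilon$ vertices that are each white with probability $1/2$. You instead absorb the dependence induced by the well-bicolouring condition into a single global comparison: since $\GW_{wb}$ is $\GW_{bc}$ conditioned on an event of probability $\GW_{\mathrm{unif}}[2^{-r(\theta_n)}]$, which you show is bounded below, you get $\GW_{wb}\le C\,\GW_{bc}$ and can run the union bound under the fully independent colouring. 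This is slicker---it yields the general statement in one shot and the rightmost-branch case as a corollary rather than a prerequisite---while the paper's approach is more hands-on and makes the combinatorics of the rightmost branch completely explicit. One small wording quibble: when you say the ancestor path is ``determined by its lowest vertex, which must have height at least $L$'', you mean the endpoint \emph{farther} from the root (the descendant end $x_0$); the count of at most $n$ such paths is of course unaffected.
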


\begin{proof}We start by showing the last, more specific assertion: that the probability of $\tau_n$ having a rightmost branch of length at least $\alpha n^\epsilon$ is $\oexp(n)$.

For any $d>0$, the number of bicoloured trees with $n$ vertices and a rightmost branch of length $d$ (as seen for example in \cite{bijectionpaper}) is
$$2^{n-1-d}Cat(n-1-d,d)=2^{n-1-d}\frac{d}{2n-2-d}{2n-2-d \choose n-1-d},$$
that is the number of plane trees with $n$ vertices and a rightmost branch of length $d$ (or, equivalently, of sequences of $d$ plane trees with $n-1$ vertices in total, see Figure~\ref{rightmost branch}), multiplied by the number of possible bicolourings ($2^{n-1-d}$, since $d+1$ out of the $n$ vertices belong to the rightmost branch of the tree).

The total number of outerplanar maps with $n$ vertices is asymptotic (up to a multiplicative constant) to $2^{3n}n^{-\frac{3}{2}}$, as can be easily obtained from Stirling estimates for the above formula (see, for detailed analogous computations, \cite{bijectionpaper}); on the other hand,
$$\sum_{\alpha n^{\epsilon}<d<n}2^{n-1-d}Cat(n-1-d,d)\leq \sum_{\alpha n^{\epsilon}<d<n} 2^{n-1-d}{2n-2-d \choose n-1-d},$$
which in turn is less than
$$2^{3n}p(n)\sum_{\alpha n^{\epsilon}<d<n}2^{-2d},$$
where $p(n)$ a polynomial in $n$. As a consequence, the above expression divided by the total number of outerplanar maps with $n$ vertices is $\oexp(n)$. 

It is now very easy to extend the result to general ancestor paths. Since the probability that $\tau_n$ has a rightmost branch of length at least $\frac{\alpha }{2}n^\epsilon$ we have shown to be $\oexp(n)$, we may assume $\tau_n$ is conditioned on having a rightmost branch of length less than $\frac{\alpha }{2}n^\epsilon$. For each vertex $v$ of height at least $\alpha n^{\epsilon}$ in $\tau_n$ consider the path $P_v=vp(v)\ldots p^i(v)\ldots p^{\lceil \alpha  n^\epsilon\rceil}(v)$. No more than $\frac{1}{2}\alpha n^\epsilon$ of its vertices belong to the rightmost branch of $\tau_n$, and thus the probability of $P_v$ being entirely white is at most $2^{-\frac{\alpha }{2}n^\epsilon}$. Hence
$$\GW\left(\ \parbox{11cm}{\centering$\tau_n$ has an ancestor path of length $\alpha n^\epsilon$ entirely made up of white vertices and the rightmost branch of $\tau_n$ is shorter than $\frac{\alpha }{2}n^\epsilon$}\ \right)<n\cdot2^{-\frac{\alpha }{2}n^\epsilon}$$
which is $\oexp(n)$ as wanted.
\end{proof}

Proposition \ref{final prop} is a substantial step toward being able to bound the Gromov-Hausdorff distance between a (rescaled) tree and its corresponding planar map, but dealing with distances from the root is not enough: we need a way to derive results of the same kind about distances between generic vertices.

To this end, we will rewrite the map-distance between two vertices in terms of the distances between each vertex and the root. This is easily done when the vertices in question are related, and the one of smaller height is black; this basic case we will use as a stepping stone, together with Lemma \ref{ancestor path}, to establish the required general result.

\begin{lem}\label{ancestor}Let $\tau$ be a bicoloured tree and $v$ a vertex in $\tau$; let $w$ be a black ancestor of $v$ in $\tau$, $\emptyset$ the root of $\tau$; call $d_M$ the map-distance on $\tau$ and (as in Section \ref{preliminary lemmas}) write $d_M(u)$ for $d_M(u,\emptyset)$. Then $|d_M(v,w)-d_M(v)+d_M(w)|\leq 2$.\end{lem}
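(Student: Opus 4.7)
The plan is to establish the two bounds $|d_M(v,w) - d_M(v) + d_M(w)| \leq 2$ separately. The lower bound $d_M(v,w) - d_M(v) + d_M(w) \geq 0$ is immediate from the triangle inequality $d_M(v) \leq d_M(v,w) + d_M(w)$. For the upper bound $d_M(v,w) - d_M(v) + d_M(w) \leq 2$, the strategy is, given a map-geodesic $\gamma = (\gamma_0, \ldots, \gamma_n)$ from $v$ to $\emptyset$ (so $n = d_M(v)$), to exhibit some $\gamma_j$ adjacent to $w$ in $\Psi(\tau)$. This suffices: then $d_M(v,w) \leq j+1$, $d_M(w) \leq n-j+1$, and summing gives $d_M(v,w) + d_M(w) \leq d_M(v) + 2$.

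To find such a $\gamma_j$, I will appeal to the second half of Proposition~\ref{vlemma}, which forces each step of $\gamma$ to be one of $p$, $r$, $t$ (with colour-dependent restrictions). Let $j$ be the least index with $\gamma_j \notin \tau^w$; this exists since $v \in \tau^w$ and $\emptyset \notin \tau^w$. An $r$-move never exits $\tau^w$ (children of vertices in $\tau^w$ stay in $\tau^w$); a $p$-move exits $\tau^w$ only from $w$ itself; a $t$-move can exit only from a black vertex. Thus either (i) $\gamma_{j-1} = w$, in which case $d_M(v,w) \leq j-1$ and $d_M(w) \leq n-j+1$ immediately give $d_M(v,w) + d_M(w) \leq d_M(v)$; or (ii) $\gamma_{j-1}$ is a strict descendant of $w$, black, and $\gamma_j = t(\gamma_{j-1}) \notin \tau^w$.

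The crux is case (ii): I claim $t(w) = t(\gamma_{j-1}) = \gamma_j$, so that $w$ and $\gamma_j$ are joined by the target edge issuing from $w$. Unpacking the definition, $t(x)$ is the leftmost right sibling of the first vertex in the chain $x, p(x), p^2(x), \ldots$ that has a right sibling. If any vertex on the tree path from $\gamma_{j-1}$ up to (but excluding) $w$ had a right sibling, that sibling would lie inside $\tau^w$, forcing $\gamma_j \in \tau^w$ and contradicting the minimality of $j$. Hence the first right-sibling-possessing vertex in the chain from $\gamma_{j-1}$ lies on the chain $\{w, p(w), p^2(w), \ldots\}$, and is therefore also the first such vertex in the chain from $w$. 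The identification $t(\gamma_{j-1}) = t(w)$ follows, completing the argument.

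The principal obstacle is this identification $t(\gamma_{j-1}) = t(w)$ together with the verification that $t(w)$ is actually well defined. For the latter one needs that \emph{some} vertex on the chain from $w$ upward has a right sibling; otherwise $w$ lies on the rightmost branch of $\tau$, which in a well-bicoloured tree is entirely white, contradicting $w$ being black. Once the adjacency $w$--$\gamma_j$ in $\Psi(\tau)$ is in place, the bound $d_M(v,w) + d_M(w) \leq d_M(v) + 2$ reduces to adding $d_M(v,w) \leq d_M(v,\gamma_j) + 1$ and $d_M(w) \leq d_M(\gamma_j) + 1$ to the geodesic identity $d_M(v,\gamma_j) + d_M(\gamma_j) = d_M(v)$, and the lemma follows.
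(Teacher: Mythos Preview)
Your argument is correct and follows essentially the same route as the paper: take a map-geodesic from $v$ to the root, locate the first step at which it exits $\tau^w$, and observe that either the geodesic passes through $w$ or it jumps from a black descendant of $w$ to $t(w)$, so that $w$ is adjacent in $\Psi(\tau)$ to a vertex on the geodesic. Your write-up is in fact more explicit than the paper's on two points---the justification that $t(\gamma_{j-1})=t(w)$ and the well-definedness of $t(w)$ via the rightmost-branch colouring---while the paper states these in one line; conversely, the paper records the two-sided estimate $d_M(v,s)\le d_M(v,w)\le d_M(v,s)+2$ directly, whereas you handle the lower bound by the triangle inequality, which is cleaner.
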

\hspace{-21pt}
\begin{tabular}{@{}p{0.85\textwidth}@{ }p{0.1\textwidth}@{}}
\vspace{-14pt}
\begin{proof}
Consider a map-geodesic from $v$ to the root; if this path goes through $w$, then $d_M(v)=d_M(v,w)+d_M(w)$. If it does not, then at some point it jumps from a descendant $s$ of $w$ onto a target $t$ `below' $w$, which must be the child of an ancestor of $w$, thus also the target of $w$. We have
$$d_M(v,s)\leq d_M(v,w)\leq d_M(v,s)+2$$
$$d_M(v)=d_M(v,s)+1+d_M(t)$$
$$d_M(t)\leq d_M(w)\leq d_M(t)+1$$
hence $|d_M(v)-d_M(v,w)-d_M(w)|\leq 2$ as wanted.
\end{proof} &
\vspace{5pt}
\begin{tikzpicture}[node distance=20 pt, v/.style={circle,inner sep=0pt,minimum size=4pt,fill=black}, g/.style={circle,inner sep=0pt,minimum size=4pt,fill=gray}, c/.style={circle,inner sep=0pt,minimum size=4pt, color=black, fill=red}]
\node [v, xshift=50pt, yshift=30pt, label=left:$w$] (w) {};
\node [g, above of=w, yshift=20pt, label=$v$] (v) {};
\node [v, above right of=w, label=right:$s$] (s) {};
\draw [decorate, decoration={random steps,segment length=4pt,amplitude=2pt},
              rounded corners=1pt, color=red] (v) -- +(4pt,3pt) -- (s);
\node [g, xshift=53pt,yshift=50pt] (1) {};
\node [g, below right of=w, label=right:$t$] (t) {};
\node [g, below of=w] (2) {};
\node [c, below of=2,yshift=-12pt] (x) {};
\draw [decorate, decoration={zigzag,segment length=7,amplitude=.9,
  post=lineto,post length=2pt}] (2)--(w)--(1)--(v);
\draw [draw=black] (t)--(2);
\draw [decorate, decoration={zigzag,segment length=7,amplitude=.9,
  post=lineto,post length=2pt}] (s)--(1);
\draw [decorate, decoration={zigzag,segment length=7,amplitude=.9,
  post=lineto,post length=2pt}] (2)-- (x);
\draw [densely dashed, ->, color=red] (s) .. controls +(south east:10pt) and +(north east:10pt) .. (t);
\draw [densely dashed, ->] (w) -- (t);
\draw [decorate, decoration={random steps,segment length=2pt,amplitude=2pt},
              rounded corners=1pt, color=red] (t) -- (x);
\end{tikzpicture}\\
\end{tabular}

Here is a general statement analogous to Proposition \ref{final prop}, where we write $d_T(u,v)$ for the distance of two vertices \emph{in the tree}, and $d_M$ for the map distance (again, with $d_M(u)$ being the map distance from the root).

\begin{cor}\label{black distances}
 Let $\tau_n$ be a random well bicoloured tree with $n$ vertices, and let $\di(\tau_n)$ be its (random) diameter; then for all $\epsilon>0$
$$\GW\left(\exists u, v \in \tau_n \mbox{ s.t. } |d_M(u,v)-cd_T(u,v)|\geq\epsilon\max\{\di(\tau_n), \sqrt{n}\}\right)=\oexp(n).$$
\end{cor}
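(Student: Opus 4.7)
The plan is to combine Proposition \ref{final prop} (which gives $d_M(x)\approx c|x|$ uniformly in $x$), Lemma \ref{ancestor path} (forbidding long all-white ancestor paths), and Lemma \ref{ancestor} (handling a black ancestor), together with a mild extension of the separating-pair argument of Proposition \ref{vlemma}. Fix $\epsilon>0$; by the first two results, with probability $1-\oexp(n)$ we may work on the event $E_n$ on which (A) $|d_M(x)-c|x||\leq (\epsilon/4)\max\{\di(\tau_n),\sqrt{n}\}$ holds for every $x\in\tau_n$, and (B) $\tau_n$ has no all-white ancestor path of length $\lceil n^{1/4}\rceil$. On $E_n$, for any two vertices $u,v$ with lowest common ancestor $w$, (B) lets me choose a black ancestor $w^\star$ of $w$ (hence of both $u$ and $v$) with $|w|-|w^\star|\leq n^{1/4}$.

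The upper bound is then immediate: triangle inequality and two applications of Lemma \ref{ancestor} to $w^\star$ yield
\[
d_M(u,v)\leq d_M(u,w^\star)+d_M(v,w^\star)\leq d_M(u)+d_M(v)-2d_M(w^\star)+4,
\]
and (A) converts this to $c\,d_T(u,v)+O(\epsilon\max\{\di,\sqrt{n}\})$, since $c(|w|-|w^\star|)\leq cn^{1/4}=o(\sqrt{n})$. For the lower bound I would first prove the following two-point analogue of Proposition \ref{vlemma}: \emph{any map-geodesic $u=z_0,\ldots,z_L=v$ contains a vertex at tree-distance $\leq 1$ from $w$.} The case $u=w$ or $v=w$ is trivial, and the lower bound there follows from $d_M(u,v)\geq|d_M(u)-d_M(v)|$ combined with (A). Otherwise, let $w_u,w_v$ be the children of $w$ that are ancestors of $u,v$; after swapping $u\leftrightarrow v$ if necessary, I may assume $w_u$ lies to the left of $w_v$, so that $w_u$ has a right sibling. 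Setting $S:=\tau^{w_u}$ and $i_0:=\min\{i:z_i\notin S\}$, the same case analysis as in the proof of Proposition \ref{vlemma} forces either $z_{i_0-1}=w_u, z_{i_0}=w$ (tree edge), or $z_{i_0}$ equal to a right sibling of $w_u$ (target of $z_{i_0-1}$), or $z_{i_0-1}=w_u$ (target of $z_{i_0}$) — in every case, one of $z_{i_0-1},z_{i_0}$ is at tree-distance $\leq 1$ from $w$.

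Calling such a vertex $z_k$, since it lies on a geodesic one has $d_M(u,v)=d_M(u,z_k)+d_M(z_k,v)\geq d_M(u,w)+d_M(v,w)-2$ by the triangle inequality and $d_M(z_k,w)\leq 1$. Using $d_T(w,w^\star)\leq n^{1/4}$ to deduce $d_M(x,w)\geq d_M(x,w^\star)-n^{1/4}$ for $x\in\{u,v\}$, and Lemma \ref{ancestor} for the reverse inequality to $w^\star$, I would obtain
\[
d_M(u,v)\geq d_M(u)+d_M(v)-2d_M(w^\star)-2n^{1/4}-6,
\]
which, after invoking (A) and absorbing the $O(n^{1/4})=o(\sqrt n)$ term, matches the upper bound. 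The main obstacle is the geodesic-localization claim: the separating-pair argument of Proposition \ref{vlemma} must be adapted from geodesics to the root to geodesics between two arbitrary vertices, and the option of swapping the roles of $u$ and $v$ is essential precisely to avoid the degenerate case in which the chosen child of $w$ is rightmost and therefore has no right sibling to receive a target.
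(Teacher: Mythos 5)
Your approach is essentially the same as the paper's: restrict to the event of having no long all-white ancestor path (Lemma~\ref{ancestor path}), replace the lowest common ancestor $w$ by a nearby \emph{black} ancestor $w^\star$, use Lemma~\ref{ancestor} to convert $d_M(u,w^\star)$ and $d_M(v,w^\star)$ into root-distances, and control root-distances via Proposition~\ref{final prop}. Your two-point localisation of the geodesic near $w$ is also sound; the paper uses the separating set of strict descendants of $w$ lying strictly to the left of $w_v$ (so the only exit points are $w$ or $w_v$), while you use $S=\tau^{w_u}$, which lets an extra case slip in ($z_{i_0-1}=w_u$ as the target of some $z_{i_0}$ outside $S$) --- you correctly noticed and handled this case, but note that it does \emph{not} appear in the case analysis of Proposition~\ref{vlemma} for that choice of $S$, so ``the same case analysis'' is a slight overstatement.

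There is one genuine (though easily fixable) gap. You assert that on $E_n$, for \emph{any} $u,v$ with LCA $w$, condition (B) lets you choose a black ancestor $w^\star$ of $w$ with $|w|-|w^\star|\leq n^{1/4}$. This fails when $|w|<n^{1/4}$ and the whole ancestor line of $w$ happens to be white (for instance when $w$ is the root, which is always white). In that degenerate regime there is no $w^\star$ to feed into Lemma~\ref{ancestor}. The paper sidesteps this by taking $z$ to be the root whenever $|w|$ is below the threshold, noting that the conclusions of Lemma~\ref{ancestor} are trivial for the root ($d_M(x,\root)=d_M(x)$ and $d_M(\root)=0$). You should add the same alternative: when $|w|<n^{1/4}$, set $w^\star=\root$ and observe directly that $|d_M(x,w)-d_M(x)|\leq d_M(w)\leq |w|<n^{1/4}=o(\sqrt{n})$, which is absorbed in the error term exactly as the $O(n^{1/4})$ coming from $d_T(w,w^\star)$ is in the generic case.
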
 

\begin{proof}
Thanks to Lemma \ref{ancestor path} (by choosing $\alpha=\frac{\epsilon}{8}$) we may restrict ourselves to the event of $\tau_n$ having no white ancestor path of length $\frac{\epsilon}{8}\sqrt{n}$ or greater.

Consider, given $u$ and $v$ vertices of $\tau_n$, their first common ancestor $w$. Either $|w|>\frac{\epsilon}{8}\sqrt{n}$, in which case there is a \emph{black} ancestor $z$ of $w$ such that $d_T(w,z)\leq\frac{\epsilon}{8}\sqrt{n}$ (otherwise there would be a long white ancestor path), or $|w|<\frac{\epsilon}{8}\sqrt{n}$, in which case we just set $z$ to be the root of $\tau_n$.

Suppose without loss of generality that $u$ lies to the left of $v$ and let $w'$ be the child of $w$ that is also an ancestor of $v$; then a map-geodesic from $u$ to $v$ goes through at least one of $w$ and $w'$, thanks to an argument very similar to that employed in Proposition \ref{vlemma}.
This yields that $|d_M(u,v)-d_M(u,w)-d_M(w,v)|\leq 2$ (this is trivial if the map-geodesic passes through $w$; if it goes through $w'$ then we have $|d_M(u,v)-d_M(u,w)-d_M(w,v)|=|d_M(u,w')+d_M(w',v)-d_M(u,w)-d_M(w,v)|\leq|d_M(u,w')-d_M(u,w)|+|d_M(w',v)-d_M(w,v)|\leq2$).

Also notice that we have $|d_M(u,w)-d_M(u,z)|\leq d_M(w,z)\leq \frac{\epsilon}{8}\sqrt{n}$, and the same inequality is true if we substitute $v$ for $u$.

Now, if $z$ is black, then $|d_M(u,z)-d_M(u)+d_M(z)|\leq 2$ and $|d_M(v,z)-d_M(v)+d_M(z)|\leq 2$ by Lemma \ref{ancestor}; otherwise $z$ is the root of $\tau_n$, and the same assertions are trivial (since $d_M(u,z)=d_M(u)$ and $d_M(z)=0$).

All of the above observations combined yield
$$|d_M(u,v)-cd_T(u,v)|=|d_M(u,v)-c(|u|+|v|-2|w|)|\leq$$
$$\leq\frac{\epsilon}{2}\sqrt{n}+6+|d_M(u)-c|u||+|d_M(v)-c|v||+2|d_M(z)-c|z||.$$

Hence
$$\GW\left(\exists u, v \in \tau_n \mbox{ s.t. } |d_M(u,v)-cd_T(u,v)|\geq\epsilon\max\{\di(\tau_n), \sqrt{n}\}\right)\leq$$
$$\GW\left(\exists u, v, z \in \tau_n \mbox{ s.t. }\mbox{\fontsize{10}{30}\selectfont $\frac{\epsilon}{2}\sqrt{n}+6+|d_M(u)-c|u||+|d_M(v)-c|v||+2|d_M(z)-c|z||\geq\epsilon\max\{\di(\tau_n), \sqrt{n}\}$}\right)\leq$$
$$\GW\left(\exists u, v, z \in \tau_n \mbox{ s.t. } |d_M(u)-c|u||+|d_M(v)-c|v||+2|d_M(z)-c|z||\geq\frac{\epsilon}{4}\max\{\di(\tau_n), \sqrt{n}\}\right)\leq$$
$$\GW\left(\exists u\in \tau_n \mbox{ s.t. } |d_M(u)-c|u||\geq\frac{\epsilon}{16}\max\{\di(\tau_n), \sqrt{n}\}\right)$$
which we show to be $\oexp(n)$ by invoking Proposition \ref{final prop}.
\end{proof}

The time has come for the proof of our main theorem, which is now quite straightforward; we restate it here:

{
\renewcommand{\theteo}{1.1}
\begin{teo}Let $\mathbf{M}_{n}$ be a random uniform rooted simple outerplanar map with $n$ vertices, and denote by $ d_{gr}$ the graph distance on  the set of its vertices $V(\mathbf{M}_{n})$. We have the following convergence in distribution for the Gromov-Hausdorff topology:
$$ \left( V(\mathbf{M}_{n}), \frac{d_{gr}}{ \sqrt{n}} \right) \quad \xrightarrow[n\to\infty]{(d)} \quad \frac{7 \sqrt{2}}{9} \cdot ( \mathcal{T}_{e},d),$$
where $ ( \mathcal{T}_{e}, d)$ is the Brownian CRT of Aldous. We adopt here the normalisation of Le Gall \cite{GW} by considering  $ \mathcal{T}_{e}$ as constructed from a normalised Brownian excursion.\end{teo}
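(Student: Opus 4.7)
The plan is to combine Corollary \ref{black distances} with the Gromov--Hausdorff convergence of the underlying well bicoloured tree to a scaled CRT. By Theorem \ref{bij}, $\mathbf{M}_n$ has the same distribution as $\Psi(\tau_n)$, where $\tau_n$ is a uniform well bicoloured tree with $n$ vertices; under the canonical identification of vertex sets, the graph metric $d_{gr}$ on $V(\mathbf{M}_n)$ coincides with the map metric $d_M$ on $V(\tau_n)$.

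The first step is to establish that
$$
\left(V(\tau_n),\ \frac{d_T}{\sqrt n}\right) \xrightarrow[n\to\infty]{(d)} \sqrt{2}\cdot (\mathcal{T}_e, d)
$$
in the Gromov--Hausdorff topology, where $d_T$ denotes the tree graph distance. Forgetting the colouring, the marginal law of the underlying plane tree of $\tau_n$ is biased by the factor $2^{-r(\tau)}$ with respect to the uniform plane tree, where $r(\tau)$ is the length of the rightmost branch. A direct generating-function computation (using the counts already appearing in the proof of Lemma \ref{ancestor path}) shows that $r(\tau_n)$ converges in distribution to a random variable with geometric tails; in particular it remains $O(1)$ in probability, in stark contrast to the typical $\sqrt n$-scale of the rightmost branch in the uniform plane tree. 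One may thus view $\tau_n$ as a critical Galton--Watson tree with $\mathrm{Geom}(1/2)$ offspring (whose variance is $\sigma^2 = 2$) modified only along a bounded-length rightmost spine, conditioned on its total size. Standard convergence theorems for such size-conditioned Galton--Watson trees with localized spine perturbations yield the CRT as scaling limit, with Le Gall's normalization producing the multiplicative constant $2/\sigma = \sqrt{2}$.

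Once this is in hand, the rest of the argument is short. Writing $c = 7/9$, Corollary \ref{black distances} guarantees that for every $\epsilon > 0$, with probability $1 - \oexp(n)$,
$$
\sup_{u,v \in V(\tau_n)} |d_M(u,v) - c \cdot d_T(u,v)| \ \leq\ \epsilon \cdot \max\{\di(\tau_n), \sqrt n\}.
$$
Since the first-step convergence forces $\di(\tau_n)/\sqrt n$ to be tight, we have $\max\{\di(\tau_n), \sqrt n\} = O(\sqrt n)$ with probability going to $1$, so $\sup_{u,v}|d_M - c\, d_T|/\sqrt n \to 0$ in probability. This in turn implies that the Gromov--Hausdorff distance between $(V(\tau_n), d_M/\sqrt n)$ and $(V(\tau_n), c\cdot d_T/\sqrt n)$ tends to zero in probability. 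Combining with the tree convergence yields
$$
\left(V(\mathbf{M}_n),\ \frac{d_{gr}}{\sqrt n}\right) \xrightarrow[n\to\infty]{(d)} c \cdot \sqrt 2 \cdot (\mathcal{T}_e, d) \ =\ \frac{7\sqrt 2}{9}\cdot (\mathcal{T}_e, d),
$$
which is Theorem \ref{main}.

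The principal obstacle is the first step: rigorously showing that the bias by $2^{-r(\tau)}$ does not alter the CRT scaling limit. Since this bias is concentrated on an exponentially small event under the uniform plane tree distribution (where $r$ is typically of order $\sqrt n$), it cannot be handled as an absolutely continuous perturbation; the cleanest route is to re-describe $\tau_n$ directly as a Galton--Watson tree with a modified, bounded-length rightmost spine and to appeal to the robustness of the CRT limit under such localized spine perturbations.
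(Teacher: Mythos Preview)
Your proposal is correct and follows essentially the same route as the paper: use Corollary~\ref{black distances} to show that $d_M$ and $c\,d_T$ differ by $o(\sqrt{n})$ uniformly, and combine this with a CRT limit for the well bicoloured tree $\tau_n$ itself.

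The only point of divergence is how you justify the convergence $(\tau_n, d_T/\sqrt{n}) \to \sqrt{2}\cdot(\mathcal{T}_e,d)$. You invoke a general robustness principle for ``size-conditioned Galton--Watson trees with localized spine perturbations''; the paper instead makes this step concrete: conditionally on the rightmost branch having length $k$ (which is tight), $\tau_n$ decomposes into a forest of $k$ plane trees on $n-1$ vertices, and such a forest has a unique macroscopic tree of size $n-o(n)$ which is uniform among plane trees of its size (with a reference to \cite{MP02}). This is precisely the ``cleanest route'' you sketch in your final paragraph, so you have already identified the right way to fill the one soft spot in your argument.
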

}

\begin{proof}
Corollary \ref{black distances} yields that, given a random well bicoloured tree with $n$ vertices $\tau_n$,
$$\GW\left(d_{GH}(\Psi(\tau_n), c\tau_n)\geq \epsilon\max\{\sqrt{n},\di(\tau_n)\} \right)=\oexp(n)$$
where $\Psi(\tau_n)$ in this context is seen as the metric space made up of the vertices of $\tau$ equipped with the map-distance, and $c\tau_n$ is the set of vertices in $\tau_n$, equipped with the graph distance of $\tau_n$ rescaled by a factor $c=\frac{7}{9}$.

This is because
$$d_{GH}(\Psi(\tau_n), c\tau_n)\leq\frac{1}{2}\left(\sup_{u,v\in \tau_n}|d_M(u,v)-cd_T(u,v)|\right)\!,$$
as seen by considering the trivial correspondence between the vertices of $\tau_n$ and those of $\Psi(\tau_n)$.

Thus we have established that
$$\lim_{n\rightarrow\infty}d_{GH}\left(\frac{\Psi(\tau_n)}{\max\{\sqrt{n},\di(\tau_n)\}},\frac{c\tau_n}{\max\{\sqrt{n},\di(\tau_n)\}}\right)=0$$
in probability, where $\tau_n$ is a random rooted well bicoloured plane tree with $n$ vertices.

We now claim that
$$\lim_{n\rightarrow\infty}\left(\tau_n, \frac{d_{gr}}{\sqrt{2n}}\right)=(\mathcal{T}_{\mathbf{e}},d)$$
in distribution for the Gromov-Hausdorff distance, with $(\mathcal{T}_{\mathbf{e}},d)$ being the CRT. 

This result is a consequence of a famous theorem of Aldous \cite{Ald93} and would be immediate if $\tau_n$ were replaced by a uniform plane tree with $n$ vertices or, equivalently, by a critical geometric Galton-Watson tree conditioned on having $n$ vertices. Even though this is not the case, $\tau_n$ is not very far from the latter: indeed, we saw in Lemma \ref{ancestor path} that the length $L_n$ of the rightmost branch of $\tau_n$ remains tight (it even converges in distribution) as $n\rightarrow \infty$, and furthermore that, conditionally on ${L_n = k}$, the $k$ subtrees grafted onto the rightmost branch form a forest $\tau_n^k(1), \tau_n^k(2), \ldots, \tau_n^k(k)$ whose total number of vertices is $n-1$. It is known that such a forest has, as $n\rightarrow\infty$, a unique macroscopic tree $\tilde{\tau}_n$ of size $s_n=n-o(n)$, which is uniformly distributed over all plane trees of size $s_n$. The scaling limit of $\tau_n$ is thus the same as that of $\tilde{\tau}_n$, which is that of a random uniform plane tree of size $n$. See Section~3.3 of \cite{MP02} for details.

But then the random variable $$\frac{\max\{\sqrt{n},\di(\tau_n)\}}{\sqrt{n}}$$ also converges in distribution, and to an almost surely positive random variable. That is, if we multiply by $\frac{\max\{\sqrt{n},d(\tau_n)\}}{\sqrt{n}}$ we can in fact deduce that
$$\lim_{n\rightarrow\infty}d_{GH}\left(\frac{\Psi(\tau_n)}{\sqrt{n}},\frac{c\tau_n}{\sqrt{n}}\right)=0.$$

Now remember that, thanks to Theorem \ref{bij}, $\Psi(\tau_n)$ is a random rooted simple outerplanar map with $n$ vertices, that is it has the same distribution as $\mathbf{M}_n$. This finally gives
$$\lim_{n\rightarrow\infty}\left(\mathbf{M}_n, \frac{d_{gr}}{\sqrt{n}}\right)=\lim_{n\rightarrow\infty}\left(\tau_n, \frac{7d_{gr}}{9\sqrt{n}}\right)=\left(\mathcal{T}_{\mathbf{e}},\frac{7\sqrt{2}}{9}d\right).$$
\end{proof}

\section*{Acknowledgements}
I would like to thank N. Curien for his numerous suggestions and improvements to the readability of this paper; J-F. Le Gall and F. Flandoli for their supervision and support; finally, D. Lombardo for the many long and in-depth discussions about the subject matter.

\bibliographystyle{acm}
\bibliography{outerplanarbib}

\end{document}